\documentclass{ws-jktr}

\usepackage {mathpartir}
\usepackage{bcprules}

\usepackage{graphicx} 
\usepackage{amsfonts}
\usepackage{amstext}
\usepackage{latexsym}
\usepackage{amssymb}
\usepackage{color}


\newcommand{\ldb}{[\![}
\newcommand{\rdb}{]\!]}

\newcommand{\plogp}{\leftarrow}

\newcommand{\newkw}{\nu}

\newcommand{\id}[1]{\texttt{#1}}

\newcommand{\pzero}{\mathbin{0}}

\newcommand{\juxtap}{\mathbin{\id{|}}}

\newcommand{\scong}{\mathbin{\equiv}}

\newcommand{\freenames}[1]{\mathbin{\mathcal{FN}(#1)}}
\newcommand{\boundnames}[1]{\mathbin{\mathcal{BN}(#1)}}

\newcommand{\newp}[2]{\id{(}\newkw \; #1 \id{)} #2}

\newcommand{\meaningof}[1]{\ldb #1 \rdb}



\newcommand{\red}{\rightarrow}
\newcommand{\wred}{\Rightarrow}


 



\newcommand{\rel}[1]{\;{\mathcal #1}\;} 



\newcommand{\bc}{\mathbin{\mathbf{::=}}}


\newlength{\ltext}
\newlength{\lmath}
\newlength{\cmath}
\newlength{\rmath}
\newlength{\rtext}

\settowidth{\ltext}{complex type name}
\settowidth{\lmath}{$xxx$}
\settowidth{\cmath}{$::=$}
\settowidth{\rmath}{\id{attributeGroup}}
\settowidth{\rtext}{repetition of $g$ between $m$ and $n$ times}



 \newcommand{\abs}[1]{\left\vert#1\right\vert}






       
\pagestyle{empty}

\begin{document}

\title{Knots as processes: a new kind of invariant}

\author{ L.G. Meredith \and David F. Snyder }
\address{ Partner, Biosimilarity LLC\\ 505 N72nd St, Seattle, WA 98103, USA, \\
  \email{ lgreg.meredith@biosimilarity.com }}
\address{Department of Mathematics\\ Texas State University---San Marcos\\ 601 University Drive \\ San Marcos, TX 78666 \\
  \email{ dsnyder@txstate.edu }}

\maketitle              


\begin{abstract}

          We exhibit an encoding of knots into processes in the
          $\pi$-calculus such that knots are ambient isotopic if and
          only their encodings are weakly bisimilar.

\end{abstract}

\keywords{Process calculi, knots, invariants}




\section{Introduction}\label{sec:introduction} 
Knot and link tabulation continues to be a lively area of scientific
research and promises to be useful to areas of science such as quantum
computing and DNA unpacking \cite{Quantum1} \cite{Quantum2} \cite{DNA1}.  The past
20 years have seen major advances in knot classification, the
development of knot invariants, and computational methods for
tabulating knots and links.  There are tables of the alternating prime
knots of up to 23 crossings \cite{Hoste2005The-enumeration}
\cite{Rankin2004Enumerating-I} \cite{Rankin2004Enumerating-II} \cite{KnotsTo16}. While
current algorithms provide complete tables of knots (and links),
winnowing these tables of duplicates is a time consuming
task. Moreover, as knot tables have proved of use to researchers in
genetics, and may prove to be to researchers in quantum computation,
the need to search these tables in meaningful ways presents
itself. For example, of the 4,976,016,485 prime, non-oriented,
alternating knots with minimal crossing number of 22, which contain
the tangle corresponding to 5/3 (if any)?  Of course, knot invariants
are useful to distinguish knots, but few provide the basis for a formal
language of knot properties with which to identify classes of knots \emph{via}  logical
expressions in the language. 

\subsection{Summary of contributions and outline of paper}

Here we present a newly-found strong knot invariant that does give
rise to such a formal language of knot properties. Knots are
invariantly associated to expressions in Milner's $\pi$-calculus
\cite{milner91polyadicpi}, a member of a family of dynamic calculi
known as the mobile process calculi. These calculi were developed for
the analysis of concurrent computation \cite{ParrowWalker}
\cite{SangiorgiWalker}.  The invariant introduced here interprets knot equivalence up
to ambient isotopy as an equivalence of process dynamics known as
bisimulation \cite{DBLP:conf/lics/Sangiorgi04} \cite{Sangiorgi95} in
the concurrency theory literature. Of critical importance, and
somewhat surprisingly, these two notions of equivalence correspond
exactly on the image of the encoding: the main result of this paper is
that two knots are ambient isotopic if, and only if, their images
under the encoding are weakly bisimilar.


Building on this result, we observe that dual to the process calculi
are a family of logics, the Hennessy-Milner logics (HMLs), providing a
logical language capable of the classification of processes via logical
properties. Factoring this capability through our encoding leads to the
development of a logical language identifying classes of knots with
logical properties. In particular, the spatial logics discovered
by Caires and Cardelli \cite{CairesC03} constitute
a particularly interesting sub-family of the HMLs having logical
connectives enabling us to take full advantage of key features of our
encoding. We illustrate the application of the logic in \cite{Caires04}
primarily \emph{via} examples of predicates that select features of
knots.

The paper is organized as follows. First, we provide a brief overview
of knot presentations relevant to this paper, given in the context of
the history of knot tabulation. Next, we give a brief overview of
process calculi, highlighting achievements in that field pertinent to
this paper. Section \ref{sec:knots_as_processes} begins the technical
exegesis with an introduction to Milner's polyadic $\pi$-calculus via
demonstration of process expressions that, much like the combinators
of Conway's tangle calculus, reflect aspects of knot structure
(crossings \emph{etc.}). The section concludes with an example
encoding of the trefoil knot. Section
\ref{sub:the_syntax_and_semantics_of_the_notation_system} gives a
condensed but complete formal presentation of the process
calculus. The main theorem is proved in section
\ref{sub:main_thm_proof_sketch}. This is followed by an account of
spatial logic via knots in section \ref{sub:characteristic_formulae}
and how this can be applied in the study and inquiry of knot tables,
with potential application in knot tabulation algorithms
themselves. Finally, new techniques are judged not solely by the
questions they answer but by the new questions to which they give
rise. In the conclusion, we identify areas of further investigation.
 

\section{A summary of knot presentations}\label{sec:notation} 

The knot presentations we focus upon herein are the Gauss code \cite{Read1977On-the-Gauss-cr}, the
Dowker-Thistle\-thwaite (DT) code \cite{DT}, the Conway notation \cite{Conway1970An-enumeration-}, and the Master
Code of Rankin, Schermann and Smith \cite{Rankin2004Enumerating-I} \cite{Rankin2004Enumerating-II}. Other presentation schemes, such
as braid representatives and Morse links, are not directly useful to
the development of this paper and thus are overlooked. We also give a
``wish-list'' of properties one would desire from one's knot notation
scheme.

\subsection{Knot presentations}
This section briefly discusses various knot notations and establishes
some terminology, with references for readers needing further details.

\subsubsection{Regular knot projections}

In 1847, J.B. Listing classified knots up to 5 crossings by analyzing
knot projections \cite{Listing1848Vorstudien-zur-}. Listing was the
first to publish an article containing a drawing of a regular knot
projection (or \emph{knot diagram} or \emph{planar diagram}). A
\emph{knot} is taken to be a polygonal simple closed curve in
$3$-space. Choose any plane on which projection onto the plane results
in a curve whose only singularities are transverse double points which
are not the image of any vertex from the polygonal curve. Using a
canonical normal vector to the plane for line of sight, for a given
double point $d$ we find the point $c$ in the projection's preimage of
$d$ farthest from our line of sight, and we indicate an undercrossing
by erasing the image of a small neighborhood of $c$ from the planar
projection. A careful presentation is given in \cite{LivingstonText}.
	
Out of all possible regular projections of a knot, there is (at least)
one with minimal number of crossings. 


\subsubsection{The Gauss code.} Gauss used regular knot projections to
arrive at what is now called the Gauss Code of a knot
\cite{Read1977On-the-Gauss-cr}. The Gauss code is the first knot notation
system. P. G. Tait used an encoding in the 1870's to classify knots up
to 7 crossings \cite{Tait}. Tait's encoding may be regarded as an
extension to the Gauss Code. One begins somewhere on the knot
projection (not at a crossing point), then proceeds along the knot
applying labels to the first, third, fifth etc. crossing until all
crossings are labeled; one then traverses the knot once more, writing
the label of each crossing in the order that you reach it, attaching a
plus or minus sign, depending on whether you are crossing over or
under.

\subsubsection{Tait, Little, and Kirkman.} Using Kirkman's
classification of certain polygons \cite{Kirkman1885The-enumeration},
Tait (and Little, using similar methods) was able to tabulate knots up
to 11 crossings \cite{Tait} \cite{Little1885On-knots-with-a}
\cite{Little1890Alternate-pm-kn}
\cite{Little1900Non-alternate-p}. Tait's system included using a
simple reduction rewrite strategy within his notation system. This notation was improved by Dowker and Thistlethwaite, as discussed in some detail below.

\subsubsection{Reidemeister moves.} Reidemeister developed a reduction
rewrite system for knot projections (“the three Reidemeister moves”)
\cite{ReidemeisterMoves}. We illustrate the three moves. The
convention in these (and in other graphically-based invariants) is
that we show only the relevant part of the knot. In our diagrams here,
we have placed dashed circles, which the reader should imagine as
viewing through scope lens only a portion of the knot, with
the remainder of the knot outside of the scope of view and unchanged.

\begin{figure}[hbt]
    \centering
    \scalebox{0.37}[0.370]{\includegraphics[viewport=0 0 390 360]{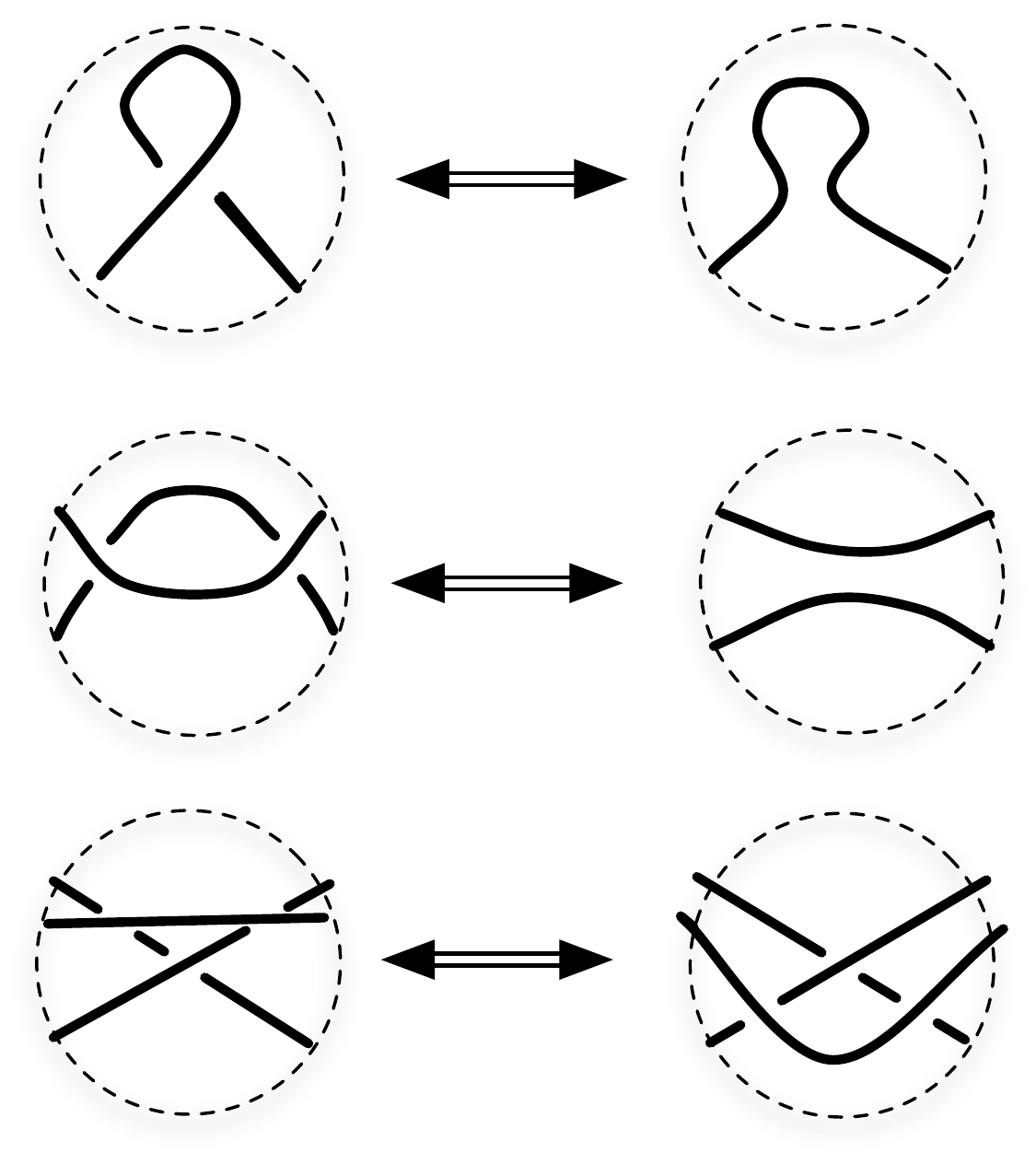}}
    \caption{ The three Reidemeister moves. The topmost move will be referred to as $R_1$, the middle one as $R_2$, and the bottommost as $R_3$. For any one of these moves $R$, we write $R^{\to}(K)$ (resp. $R^\leftarrow$(K)) if the move has been applied left-to-right (resp. right-to-left)  to the knot diagram $K$.}
\label{fig:RMoves}
\end{figure}

The import of this system is Reidemeister's Theorem: an ambient
isotopy of two knots may be exhibited as sequences of rewrites of (any of) their
diagrams, and vice versa. That is, two knots are ambient isotopic if and only if a
diagram of one may be rewritten, via a sequence of Reidemeister
moves, to a diagram of the other \cite{LivingstonText}.

\subsubsection{Knotation} Conway developed a clever notational system
for tangles, finding an algebraic-like system for this system that led to several methods of reduction rewrites
\cite{Conway1970An-enumeration-}. He used this system to retabulate
knots and links of 11 crossings by hand, in one afternoon (an effort
that took Tait and Little years of work), discovering one omission and
a few duplications in the process. Conway's system can be used to
classify all arithmetic (also called algebraic, or rational)
knots. The Conway code for a knot originally was given as a “basic
polyhedron” followed by a sorted list of arithmetic tangles. See
Conway's paper \cite{Conway1970An-enumeration-} for details. This system has extensions by Caudron
\cite{Caudron1981-Classification} and by Bangert
\cite{Bangert2002Algorithmic-Pro}. Conway has investigated further
structures such as bangles and bracelets.

\subsubsection{Dowker-Thistlethwaite Codes (DT codes)}\label{ssub:dowker_thistlethwaite_codes} 
Dowker created a variation of Tait's notational system that is easier
to implement computationally. Dowker and Thistlethwaite made it the
basis for an algorithm that successfully enumerated knots of up to 13
crossings \cite{DT}. Not every DT code is valid, i.e. an arbitrary DT
code may not correspond to an actual knot, and two distinct composite
knots may share the same DT Code. However, a valid DT code for a prime
knot specifies the knot uniquely \cite{SchareinPhD}. In their paper,
Dowker and Thistlethwaite develop an algorithm to filter out invalid
cases. They also give a reduction system to remove duplicates from
their enumeration.

We now recall the definition of the DT code of
a regular knot projection \cite{DT}. Begin at a selected point on the
diagram (excepting crossing points) and traverse the diagram in a
selected direction. At each encounter of a crossing, label the
crossing with the next available counting number; for an even label,
prepend a negative sign if traversing the overcross. Returning to the
beginning point, each crossing has been labeled twice, once with an
odd number and once with an even number. Let $S_\pm$ denote the set of
labels generated.

If the knot projection has $n$ crossings, then there are $2n$ labels
(from $1$ to $2n$ in absolute value) and at each crossing an unique
odd counting number paired with an even integer. This induces a
parity-reversing function $\sigma$ on $S= \{1,\ldots,2n\}$, where for
each $i\in S$ we find the crossing with label whose absolute value is $i$ and let
$\sigma(i)$ be the absolute value of the other label at that crossing.
Note that $\sigma(\sigma(i))=i$ for any $i\in S$. Note that there is
also a bijective translation $\tau: S\to S_\pm$ where
$\abs{\tau(s)}=s$ for all $s\in S$.  Then the DT code of the knot
projection is determined by $\delta=\tau\circ\sigma$, usually given as
the list $\{\delta(1),\delta(3),\ldots,\delta(2n-1)\}$. Note that
$\delta\circ\sigma=\tau$. Finally, for each $i\in S$, let $sgn(i)$
denote the sign of $\tau(i)$ (so $sgn(i)<0$ implies $i$ is even).

For each $i\in S_\pm$, let $C(i)$ denote the crossing with label
$i$. Then for any $i\in S$ we know $C(i)$ is adjacent to the two
crossings $C(i\pm 1)$ and $C(\delta(i)\pm 1))$, where addition is modulo $2n$. This fact is applied
later when giving an explicit algorithm for encoding a knot.

 
\subsubsection{From Calvo to Rankin, Flint, and Schermann} Calvo
developed an inductive knot tabulation algorithm, thereby sidestepping the need to
check validity of DT codes \cite{Calvo1997Knot-enumeratio}. However, detection of
duplication within the tables generated then becomes the issue. Calvo had the insight that
understanding the deeper flype structure of prime, non-alternating
diagram led to greater efficiencies in his algorithm. The Calvo
algorithm was essentially refined in the development of a notational
system by Rankin, Flint, and Schermann , based on what they call
the group code which reminds one somewhat of the Gauss code but, instead, using a
Conway-like insertion scheme to allow for easy reduction of flype
structures in the notation \cite{Rankin2004Enumerating-I}
\cite{Rankin2004Enumerating-II} \cite{Rankin2004EnumeratingLinks}. This results in a master array that provides a unique identifier for  prime alternating knots. This forms part of the basis for inductive construction of all prime alternating knots of crossing size $n$ from those of crossing size $n-1$(see the above papers for details).  Composite knots remain problematic, due principally to the issue of detection of duplicates.

\subsection{Desirable properties for a knot presentation system }\label{sub:desirable_properties_for_a_knot_notation_system_} 

In this section, we list some properties that one would wish for a
knot presentation system to enjoy, followed by a discussion of the
knot presentations of the last subsection in the light of these
wishes.

Due to complexity considerations, one may well despair of a knot
notation system that would allow for mathematical classification of
all knots. However, in designing a system of knot notation or
presentation, informed by the history of knot tabulation and
classification we can enumerate the following properties such a
notation system may enjoy. These properties invariably correspond to
demands of functoriality on the encoding when considered as a map from
the category of knots or braids to some suitable target category while
others are demands on the faithfulness (respectively, fullness) of the
encoding considered as functor.

\begin{itemize}

\item {Surjection (realizability).} Each code in the notation represents
  a knot (or if this is not the case, those codes that do not
  represent a knot are easily recognizable). A code for a knot should
  be easily obtained from one of its planar diagrams.

\item {Reduction.} The notation enjoys a calculus with which to reduce
  and simplify encodings. Each step of simplification or reduction
  results in an encoding representing a knot isotopy equivalent to the
  originating knot.

\item {Minimality.} The notational encoding of a given knot can be
  reduced to a (finite non-empty set of equivalent) minimal
  encoding(s).

\item {Well-definedness.} If a knot has two (or more) minimal reduced
  encodings, each of these encodings is equivalent to the notation for
  a knot equivalent \emph{via} ambient isotopy to the original.

\item {Injection.} Two knots that have equivalent
  minimal reduced encodings are ambient isotopic.

	\item {Economy.} The notation is computationally cheap and easily constructed from a diagram of the knot or from a simple code, such as the DT code.
	
\item {Compositionality.} Operations on knots, e.g. knot
  composition, correspond to natural operations on elements in the
  image of the encoding.
  
\item {Separation.} The notation can be used to classify a class X of
  knots, where X contains a previously classified class of knots (for
  example, arithmetic knots (also known as algebraic knots) and
  bracelets) but is not previously classified itself.

\item {Extensibility.} The notation enjoys a formal language in which
  to describe properties and invariants of notation objects that
  reflect interesting properties of knots. This language should also
  be useful in selecting specific sets of knots (such as the set of
  all 21-crossing prime alternating links containing the tangle
  5/3). This language ideally should be scaleable and be applicable to
  tables of indefinite size.
\end{itemize}

The DT code, with proper care taken, satisfies the
first six properties but breaks under compositionality. Conway's
tangle notation enjoys all of the properties listed, though the Conway
encoding requires foreknowledge of the classes of basic polygons with
$n$ vertices. The master array of Rankin, Flint, and Schermann 
satisfies the first five properties, but appears unlikely to support
the last four properties to a degree useful for anything other than
tabulation.

\subsubsection{A meta-knotation}

 Due to Reidemeister's Theorem, in order to establish our main theorem we need only consider diagrams of knots and not knots themselves. Hence we  adopt the following convenient
notation and terminology. The meta-variable  $K$
ranges over knot diagrams and we reserve $\mathcal{K}$ to range over knots. For simplicity, we refer to $K$ as a knot. We write $\#(K)$ to mean the number of crossing points in the diagram $K$ but will call this value the crossing number of the knot $K$, which the reader is asked not to confuse with the minimal crossing number of the knot (it is in this sense that we will later say that ``two knots from the same isotopy class can have different crossing numbers'').   Formally speaking $\sim_{\text{iso}}$ will denote  equivalence of knots under ambient
isotopy and $\sim_{R}$ will denote equivalence of diagrams under (finite) sequences of
Reidemeister transforms. However, when there is little chance of confusion we
drop the subscript and write $\sim$.  




\section{Concurrent process calculi and spatial logics }\label{sec:concurrent_process_calculi_and_spatial_logics_} 
In the last thirty years the process calculi have matured into a
remarkably powerful analytic tool for reasoning about concurrent and
distributed systems. Process-calculus-based algebraical specification of
processes began with Milner's Calculus for Communicating Systems (CCS)
\cite{MilnerCCS80} and Hoare's Communicating Sequential Processes
(CSP) \cite{CSP} \cite{CSP1} \cite{CSP2} \cite{CSP3}, and continue
through the development of the so-called mobile process calculi,
e.g. Milner, Parrow and Walker's $\pi$-calculus \cite{ParrowWalker},
Cardelli and Caires's spatial logic \cite{CairesC04} \cite{CairesC03}
\cite{Caires04}, or Meredith and Radestock's reflective calculi
\cite{MeredithR05} \cite{meredith2005rho}. The process-calculus-based
algebraical specification of processes has expanded its scope of
applicability to include the specification, analysis, simulation and
execution of processes in domains such as:

\begin{itemize}
\item telecommunications, networking, security and application level protocols
\cite{AbadiB02} 
\cite{AbadiB03} 
\cite{BrownLM05} 
\cite{LaneveZ05}; 
\item programming language semantics and design
\cite{BrownLM05}
\cite{djoin}
\cite{JoCaml}
\cite{WojciechowskiS99};
\item webservices
\cite{BrownLM05}
\cite{LaneveZ05}
\cite{MeredithB03};
\item and biological systems
\cite{Cardelli04}
\cite{DanosL03}
\cite{RegevS03}
\cite{PriamiRSS01}.
\end{itemize}

Among the many reasons for the continued success of this approach are
two central points. First, the process algebras provide a
compositional approach to the specification, analysis and execution of
concurrent and distributed systems. Owing to Milner's original
insights into computation as interaction \cite{Milner93}, the process
calculi are so organized that the behavior ---the semantics--- of a
system may be composed from the behavior of its components
\cite{Fokkink}. This means that specifications can be constructed in
terms of components ---without a global view of the system--- and
assembled into increasingly complete descriptions.

The second central point is that process algebras have a potent proof
principle, yielding a wide range of effective and novel proof
techniques \cite{MilnerS92} \cite{SangiorgiWalker} \cite{Sangiorgi95}
\cite{hop}. In particular, \emph{bisimulation} encapsulates an effective
notion of process equivalence that has been used in applications as
far-ranging as algorithmic games semantics
\cite{Abramsky2005Algorithmic-Gam} and the construction of
model-checkers \cite{Caires04}. The essential notion can be stated in
an intuitively recursive formulation: a \emph{bisimulation} between two
processes $P$ and $Q$ is an equivalence relation $E$ relating $P$
and $Q$ such that: whatever action of $P$ can be observed, taking it
to a new state $P'$, can be observed of $Q$, taking it to a new state
$Q'$, such that $P'$ is related to $Q'$ by $E$ and vice versa. $P$ and
$Q$ are \emph{bisimilar} if there is some bisimulation relating
them. Part of what makes this notion so robust and widely applicable
is that it is parameterized in the actions observable of processes
$P$ and $Q$, thus providing a framework for a broad range of
equivalences and up-to techniques \cite{milner92techniques} all governed by the same core
principle \cite{SangiorgiWalker} \cite{Sangiorgi95} \cite{hop}.

    
\section{Knots as processes}\label{sec:knots_as_processes} 

This section bootstraps intuitions about the target calculus by
introducing process expressions for key aspects of a knot's
structure. An $n$ crossing knot $K$ is modeled as a system
$\meaningof{K}$ of concurrently executing processes. More
specifically, $\meaningof{K}$ is a \emph{parallel composition} $\Pi_{i
  = 0}^{n-1} \meaningof{C(i)} | W$ of $n+1$ processes consisting of
$n$ crossing processes $\meaningof{C(i)}$ and a process $W$
constituting a ``wiring harness.'' The latter process can be thought
of as the computational equivalent to Conway's ``basic polygon,'' if
the knot is in minimal crossing number form. The complete expression
of the encoding is


\begin{align*}
  \meaningof{K} :=  (v_0 ... v_{4n-1}) & ( \Pi_{i = 0}^{n-1} (\nu \; u)\meaningof{C(i)}(v_{4i},...,v_{4i+3},u) \\
  & | \Pi_{i = 0}^{n-1} W(v_{\omega(i,0)},v_{\omega(i,1)})|W(v_{\omega(i,2)},v_{\omega(i,3)}) )
\end{align*}

Here, $C(i)$ represents the $i$th crossing in the knot diagram $K$.  The wiring process, $\Pi_{i =
  0}^{n-1}W(v_{\omega(i,0)},v_{\omega(i,1)})|W(v_{\omega(i,2)},v_{\omega(i,3)})$,
is itself a parallel composition of wire processes that correspond to
edges in the 4-valent graph of the knot shadow \cite{SchareinPhD} the constraints of
which are reflected in the indexing function $\omega$. The wiring process
may be calculated from other knot notations. For example,  we later show how the indexing function $\omega$ may be calculated
from $\delta$, the DT code of the knot projection. The crossing and wire
processes have further substructure, outlined below.

\begin{remark}[knots as abstractions]
  The reader familiar with process calculi will observe that the
  encoding actually produces an \emph{abstraction} \cite{SangiorgiWalker}  in $4\#(K)$ names
  (see the first choice in the production labeled agent in the grammar
  of \ref{sub:the_syntax_and_semantics_of_the_notation_system}). This is actually a way of demarking that the encoding should
  be insensitive to the particular set of names chosen to represent the
  ports of the crossings. Some caution must be exercised, however, as
  the encoding only preserves knot structure if the abstraction is
  applied to a vector of $4\#(K)$ distinct names.
\end{remark}

\subsection{Crossing processes}\label{sub:crossing_processes} 
A crossing is conceived in the diagram below as a black-box having
four points of external interaction (\emph{ports}) with the remainder
of the knot process and as having two internal wires each connecting a
pair of ports. As a process, the crossing has four possible behaviors,
as shown in the defining encoding below.
\begin{figure}[hbp]
    \centering
    \scalebox{0.27}[0.270]{\includegraphics[viewport=0 0 390 360]{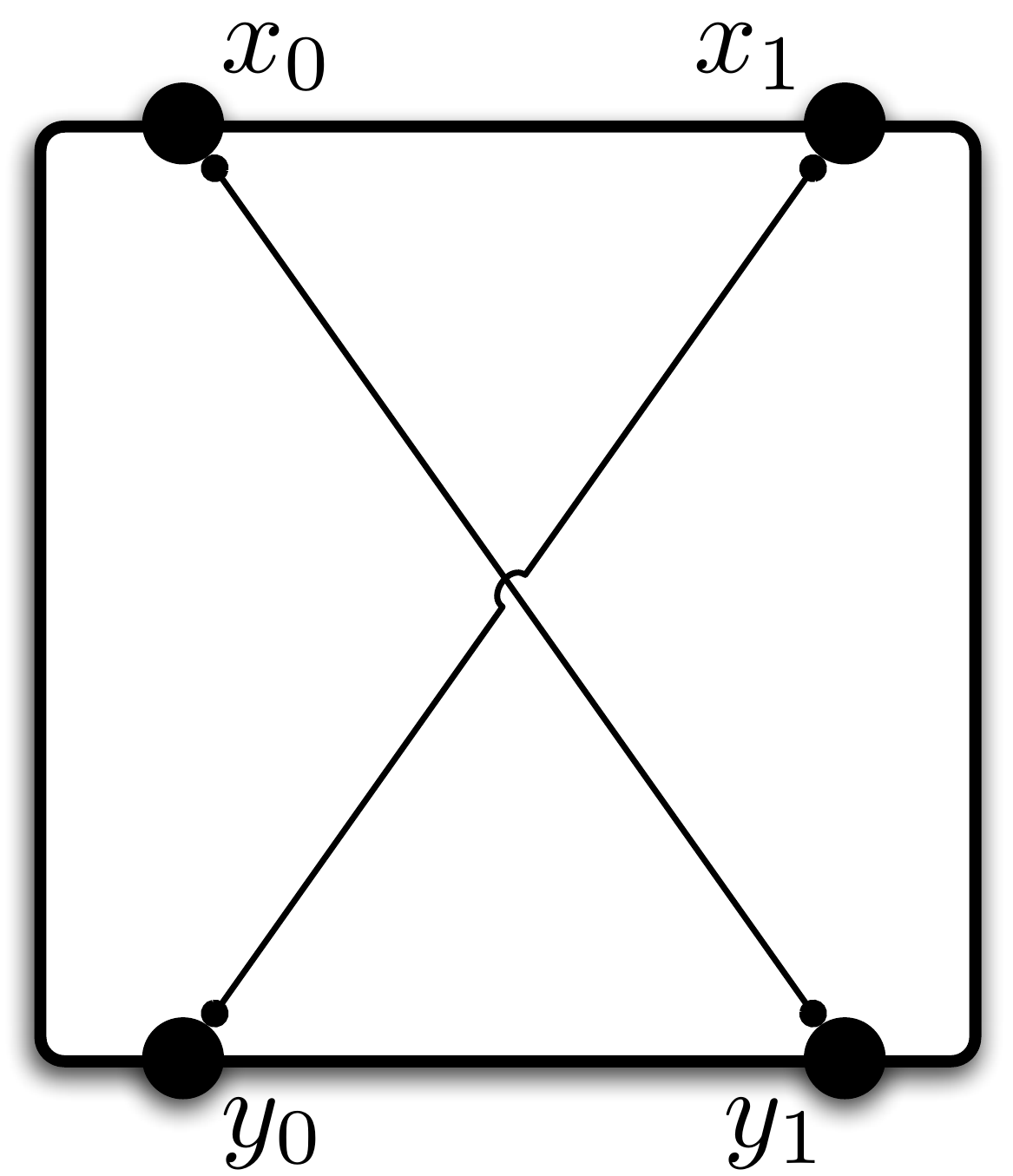}}
    \caption{ Crossing process }
\end{figure}

\begin{eqnarray*}
   C(x_0,x_1,y_0,y_1,u) & \plogp   & x_1?(s).y_0!(s).(C(x_0,x_1,y_0,y_1,u)|u!) \nonumber \\
  & & + y_0?(s).x_1!(s).(C(x_0,x_1,y_0,y_1,u)|u!) \nonumber \\
  & & + x_0?(s).(C'(x_0,x_1,y_0,y_1,u,y_1)) \nonumber \\
  & & + y_1?(s).(C'(x_0,x_1,y_0,y_1,u,x_0)) \nonumber
\end{eqnarray*}

where

\begin{eqnarray*}
   C'(x_0,x_1,y_0,y_1,u,v) & \plogp   & x_1?(s).y_0!(s).(C'(x_0,x_1,y_0,y_1,u,v)|u!) \nonumber \\
  & & + y_0?(s).x_1!(s).(C'(x_0,x_1,y_0,y_1,u,v)|u!) \nonumber \\
  & & + u?.v!(s).(C(x_0, x_1, y_0, y_1, u)) \nonumber
\end{eqnarray*}

A crossing process has four ports $x_0,x_1,y_0,y_1$ and a hidden
synchronizer $u$. Each port has a partner port, linked as shown in the
diagram (note the relationship to Conway's $\pm 1$ tangles
\cite{Conway1970An-enumeration-}). For example, the first behavior (indicated by the first
term of the summand) is that the process listens at port $x_1$ for a
signal $s$ (which will come, if at all, via the wiring
process). Having heard $s$, the signal is passed directly to the port
$y_0$ where the signal is then broadcast via the wiring process. Then
the process alerts the hidden synchronizer $u$ that a signal has been
passed between the ports, while concurrently preparing itself for
further signal processing. The second summand represents a signal
passing along the same strand in the opposite direction. The third and
fourth summands are similar to the first two, except that before
passing any received signal to its partner port, the process waits for
a signal from the synchronizer $u$ before allowing the signal to
pass, while still allowing a signal to pass between $x_1$ and $y_0$ if the synchronizer has not yet been alerted. So the role of $u$ is that of a traffic controller who gives
priority to traffic over the route between $x_1$ and $y_0$, mimicking
an over-crossing.

\subsection{Wirings}\label{sub:wirings} 

As an illustration of the expressive power of the formalism, taken
together with the short description of the process calculus in the next
section, the definitions below fully equip the interested reader to
verify that wire processes are lossless, infinite capacity buffers.

\begin{eqnarray*}
    W(x,y) & \plogp & (\nu \; n \; m)(Waiting(x,n,m) | Waiting(y,m,n)) \nonumber \\
Waiting(x,c,n) & \plogp   & x?(v).(\nu \; m)(Cell(n,v,m) | Waiting(x,c,m)) \nonumber \\
  & & + c?(w).c?(c).Ready(x,c,n,w) \nonumber \\
  Ready(x,c,n,w) & \plogp  & x?(v).(\nu \; m)(Cell(n,v,m) | Ready(x,c,m,w)) \nonumber \\
  & & + x!(w).Waiting(x,c,n) \nonumber \\
  Cell(c,v,n) & \plogp & c!(v).c!(n).0 \nonumber
\end{eqnarray*}

The ports $x$ and $y$ in which $W$ is \emph{parameterized} may be
intuitively considered splice points in the knot diagram. We adopt
this terminology in the sequel.

One may well wonder why perfect buffers are chosen to represent
wires. For example, the following process intuitively captures a notion of wire.

\begin{equation*}
  Relay(x,y) := x?(s).y!(s).Relay(x,y) + y?(s).x!(s).Relay(x,y)
\end{equation*}

The problem is one of composition. Foreshadowing the method of proof,
in the sequel we will need to compose wires and crossings and have the
result act as a wire. For example, if $W(x,y)$ represents a candidate
for wire behavior, to model the first Reidemeister move we have a demand that

\begin{equation*}
  W(y_{0},y_{1}) \simeq (\nu \; x_{0} \; x_{1} \; w_{0} \; w_{1} )(W(y_{0},w_{0}) |(\nu \; u)C(x_{0},x_{1},w_{0},w_{1},u) | W(x_{0},x_{1}) | W(y_{1},w_{1}))
\end{equation*}

Again, the reader may verify that while this is true of buffers, it is
not true of the $Relay$ process defined above.

\subsection{An example: the trefoil knot as a process}
The above intuitions are illustrated with an example, showing one way to
represent the trefoil knot as a process. Following the schema above,
the process encoding of the trefoil knot is a parallel composition of
three crossing circuits with a wiring harness whose design ensures that the
crossing circuits are connected to each other in a way that respects
the knot diagram. Additionally, we make each synchronization channel
local to each crossing via a restriction on that channel.

\begin{figure}[tbp]
\begin{center}
{\includegraphics[width=3in]{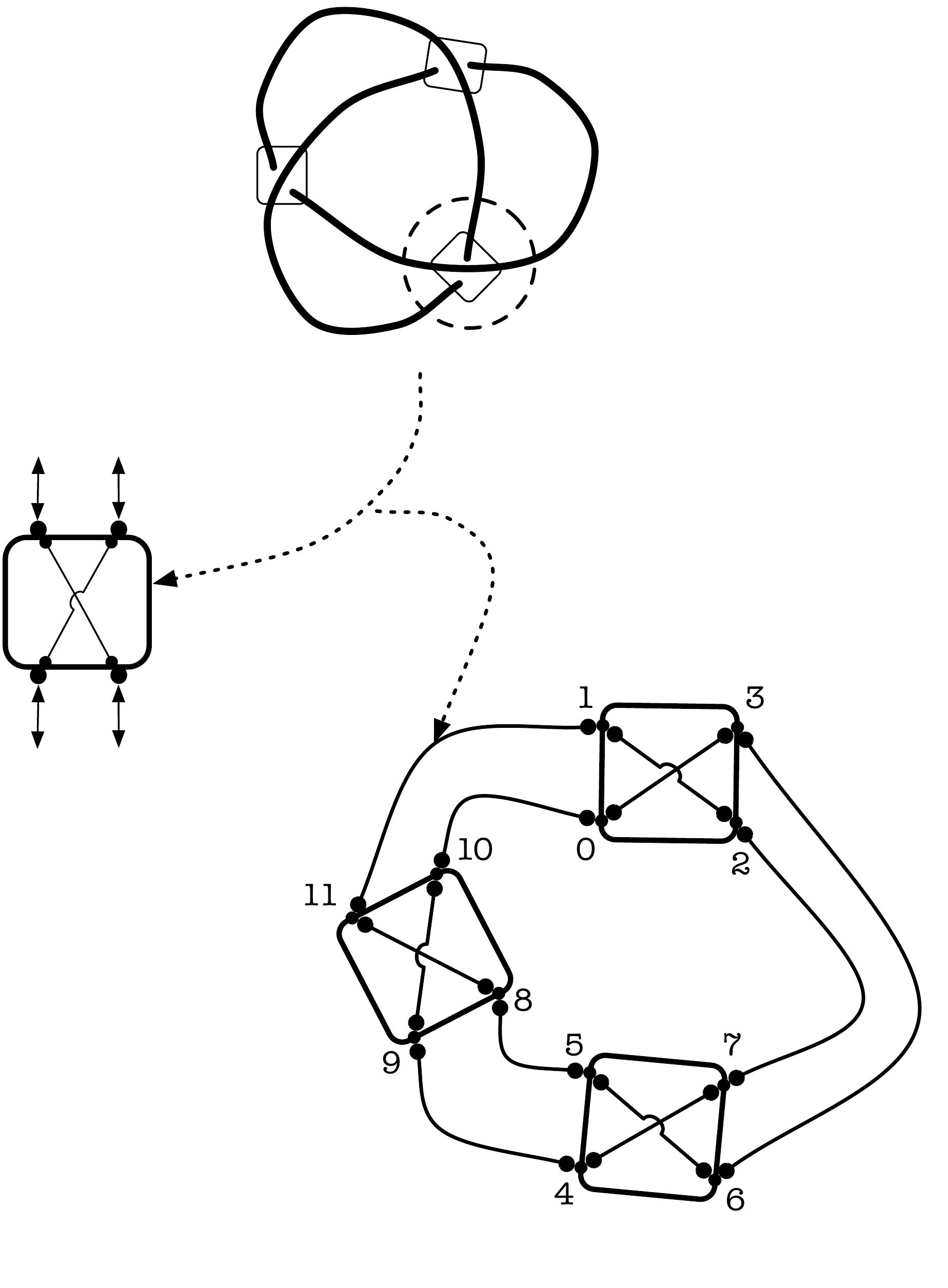}}
\caption{ The $3_1$ (trefoil) knot as process. The ports in the
  circuit diagram have been labeled with the corresponding subscripted
  index in the process expression of the text.}
\end{center}
\end{figure}


\begin{align*}\label{eq:trefoil_encoding}
  \meaningof{3_1} = (v_0 ... v_{5}) & (\nu \; u_0) C(v_0,v_1,v_2,v_3,u_0) \\
   & | W(v_2,v_7) | W(v_3,v_6) \\
   & | (\nu \; u_1)C(v_4,v_5,v_6,v_7,u_1) \\
   & | W(v_4,v_9) | W(v_5,v_8) \\
   & | (\nu \; u_2)C(v_8,v_9,v_{10},v_{11},u_2) \\
   & | W(v_{10},v_0) | W(v_{11},v_1) 
\end{align*} 

\subsection{The distinguishing power of dynamics}\label{sub:dynamic distinction} 

In summary, to each knot $\mathcal{K}$ the encoding associates an
invariant $\meaningof{K}$, an expression in a calculus of
message-passing processes via an encoding of a diagram of the
knot. More precisely, given the set of knot diagrams $\mathbb{K}$ and
the set of processes modulo structural equivalence $\mathbb {P}$ (see section
\ref{sub:the_syntax_and_semantics_of_the_notation_system}), the encoding induces a map, $\meaningof{-}: \mathbb{K} \to \mathbb
{P}$. Most importantly, the notion of equivalence of knots coincides perfectly with the notion of equivalence of
processes, i.e. bisimulation (written here and in the sequel
$\simeq$). Stated more formally,

\begin{theorem}[main]
\begin{eqnarray*}
    K_1 \sim K_2 & \iff & \meaningof{K_1} \simeq \meaningof{K_2}. \nonumber
\end{eqnarray*}
\end{theorem}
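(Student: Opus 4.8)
The plan is to prove the two directions separately, using Reidemeister's Theorem to replace ambient isotopy by finite sequences of the moves $R_1$, $R_2$, $R_3$. For the soundness direction ($\Rightarrow$), since weak bisimulation $\simeq$ is transitive it suffices to show that a single Reidemeister move preserves the bisimulation class of the encoding; iterating then handles an arbitrary isotopy. The structural ingredient I would establish first is that $\simeq$ is a congruence with respect to the two operators used to assemble a knot process, namely parallel composition $|$ and name restriction $\nu$. This is what makes the argument local: a Reidemeister move alters only a bounded subdiagram, while the untouched remainder is an encoding context $\mathcal{C}[-]$ built from exactly these two operators, so global bisimilarity of $\meaningof{K}$ and $\meaningof{K'}$ follows from bisimilarity of the two small subprocesses that the move exchanges.

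It then remains to prove one local equivalence lemma per move. For $R_1$, the required statement is precisely the wire-composition equation displayed in Section~\ref{sub:wirings}: a crossing wired into a kink is bisimilar to a bare wire $W(y_0,y_1)$. This is exactly where the choice of lossless infinite-capacity buffers, rather than the naive $Relay$ process, is forced, and I would verify it by exhibiting an explicit weak bisimulation between the two sides, tracking how a signal entering at $y_0$ threads through the internal crossing and its synchronizer and emerges at $y_1$. For $R_2$, I would show that two crossings in the cancelling configuration, composed through their shared splice points, are bisimilar to two parallel wires; here the priority discipline on the synchronizer $u$ is what guarantees the over/under bookkeeping cancels. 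For $R_3$, I would show the two triangle configurations have bisimilar encodings, again by constructing the relation explicitly and appealing to the congruence to close up under the surrounding context.

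The completeness direction ($\Leftarrow$) is the substantive one, and I expect it to be the main obstacle. Here I would realize the contrapositive by exhibiting a decoding that is invariant under $\simeq$: from the observable behavior of $\meaningof{K}$ --- the inputs and outputs available at the free splice names, together with the order in which the hidden synchronizers can be made to fire --- one reconstructs the $4$-valent shadow graph with its over/under data, i.e.\ the diagram $K$ up to relabelling of its $4\#(K)$ ports. The key claim to prove is that any two diagrams whose encodings are weakly bisimilar must already be related by a finite sequence of Reidemeister moves. Concretely, I would define a normal form on processes in the image of $\meaningof{-}$, show it is computed by $\simeq$-preserving reductions mirroring $R_1$, $R_2$, $R_3$ run in reverse, and conclude that bisimilar images have Reidemeister-equivalent normal forms, whence $[K_1]_{\sim} = [K_2]_{\sim}$.

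The hardest part, and the place where the argument must do real work, is precisely this last claim: soundness only requires building finitely many bisimulations by hand, whereas completeness requires showing that \emph{no} spurious bisimilarity exists between encodings of non-isotopic diagrams. The danger is that the rich internal nondeterminism of the buffers and synchronizers could identify processes coming from genuinely different knots, and ruling this out means characterizing the bisimulation class of each $\meaningof{K}$ tightly enough that it determines its isotopy class. A secondary technical care point throughout is that weak bisimilarity is not in general preserved by the choice operator $+$, so I must confirm that every appeal to ``congruence'' above is confined to the contexts built from $|$ and $\nu$ that actually occur in the encoding, where the closure property does hold.
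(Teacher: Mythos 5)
Your forward direction follows the paper's route almost exactly: Reidemeister's theorem, a per-move local bisimilarity lemma, and closure under the restriction-and-parallel contexts making up the rest of the encoding (the paper's ``context'' and ``substitution'' lemmas). Two technicalities that the paper treats as essential are missing from your plan, however. First, the paper's substitution lemma is \emph{not} stated for bare encodings but only for \emph{live} ones: it requires $\meaningof{K} \mid initSignal$ to be alive, i.e.\ composed with enough output barbs that every reachable state can still reduce. The hypothesis is there precisely because the third and fourth summands of a crossing block on the hidden synchronizer $u$, so one side of a move can carry extra synchronizations that stall where the other side does not; the paper's phrase is that with liveness ``there is always enough signal to overcome spurious blocking.'' Your proposed explicit bisimulations for $R_1$ and $R_2$ are stated without this side condition. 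Second, $R_1$ and $R_2$ change the crossing number and hence the arity of the abstraction $\meaningof{K}$, so the two sides of a move do not even inhabit the same interface as stated; the paper's one-step lemma reads $\meaningof{K_1}\langle \vec{v}\rangle \simeq (\nu\,\vec{w})\meaningof{K_2}\langle \vec{v} : \vec{w}\rangle$, and iterating moves requires explicit bookkeeping of which ports are hidden at each stage (the paper works a composed $R_1$-then-$R_2$ case for exactly this reason). Your plan silently assumes the two encodings can be compared at a common argument list.

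Your reverse direction is a genuinely different route. You propose a decoding: reconstruct the shadow graph with over/under data from observable behaviour, define a normal form computed by $\simeq$-preserving reductions, and show bisimilar images have Reidemeister-equivalent normal forms. The paper instead argues by contradiction and never builds a decoding: it first reduces (via the forward direction) to minimal-crossing diagrams, observes that different crossing numbers give abstractions of different arities and hence non-bisimilar processes, then notes that equal crossing numbers make the crossing blocks of the two encodings literally identical, cancels them to compare the wiring harnesses alone, and exhibits a distinguishing barb if any wire differs. The paper's argument is far shorter, but it leans on a cancellation step (from $P \mid R \simeq Q \mid R$ to $P \simeq Q$) that it attributes to congruence, which congruence alone does not deliver; your reconstruction approach avoids that move but concentrates all the difficulty in the normal-form claim you yourself flag as the hard part, and neither your sketch nor the paper's supplies that step in full. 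Your closing caution that $+$ does not preserve weak bisimilarity, so congruence must be invoked only for $\mid$ and $\nu$ contexts, is well taken and is not made explicit in the paper.
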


In particular, in contrast to other invariants, the alignment of process
dynamics with knot characteristics is what enables the invariant
identified here to be perfectly distinguishing. As discussed below,
among the other beneficial consequences of this alignment is the
ability to apply process logics, especially the spatial sub-family of
the Hennessy-Milner logics, to reason about knot characteristics and
knot classes.

We see moreover the possibility of a deeper connection. As mentioned
in the previous section, bisimulation has turned out to a powerful
proof technique in the theory of computation adaptable to a wide range
of situations and admitting a number of potent up-to techniques
\cite{DBLP:conf/lics/Sangiorgi04}. One of the central aims of this
research is to broaden the domain of applicability of
bisimulation-based proof methods.



\subsection{The syntax and semantics of the notation system}\label{sub:the_syntax_and_semantics_of_the_notation_system} 


Having bootstrapped an intuitive account of the target calculus  \emph{via}
the encoding, we now summarize a technical presentation of this calculus. The
typical presentation of such a calculus follows the style of giving
generators and relations on them. The grammar, below, describing term
constructors, freely generates the set of processes,
$\mathcal{P}$. This set is then quotiented by a relation known as
structural congruence, yielding the set $\mathbb {P}$ mentioned above
in the type signature of the map constituting the encoding.

\subsubsection{Process grammar}\label{subsub:process_grammar}

\begin{mathpar}
  \inferrule* [lab=summation] {} {{M,N} \bc \pzero \;|\; x.A \;|\; M+N}
  \and
  \inferrule* [lab=agent] {} {{A} \bc (\vec{x})P \;| \; [\vec{x}]P}
  \and \\
  \inferrule* [lab=process] {} {{P,Q} \bc N \;| \;P|Q \;| X\langle \vec{y} \rangle \;| \; (\textsf{rec} \; X(\vec{x}).P)\langle \vec{y} \rangle \;| \; (\nu \; \vec{x})P}
\end{mathpar} 


Note that $\vec{x}$ denotes a vector of names of length
$|\vec{x}|$. In the encodings for knots, crossings and wires given above we
adopted the following standard abbreviations.

\begin{mathpar}
   x?(\vec{y}).P := x.(\vec{y})P \and  x!(\vec{y}).P := x.[\vec{y}]P
   \and
   X(\vec{y}) \plogp P := (\vec{y})(\textsf{rec} \; X(\vec{x}).P)\langle \vec{y} \rangle
   \and \Pi_{i=0}^{n-1}P_i := P_0 | \ldots | P_{n-1}
\end{mathpar}

\subsubsection{Structural congruence}

\paragraph{Free and bound names and alpha-equivalence.} At the
core of structural equivalence is alpha-equivalence which identifies
process that are the same up to a change of variable. Formally, we
recognize the distinction between free and bound names. The free names
of a process, $\freenames{P}$, may be calculated recursively as
follows:

\begin{mathpar}
  \freenames{\pzero} := \emptyset
  \and \\
  \freenames{x?(\vec{y}).P} := \{ x \} \cup (\freenames{P} \setminus \{ \vec{y} \})
  \and 
  \freenames{x!(\vec{y}).P} := \{ x \} \cup \{ \vec{y} \} \cup \freenames{P} 
  \and \\
  \freenames{P|Q} := \freenames{P} \cup \freenames{Q}
  \and
  \freenames{P + Q} := \freenames{P} \cup \freenames{Q}
  \and \\
  \freenames{(\nu \; \vec{y})P} := \freenames{P} \setminus \{ \vec{y} \}
  \and  \\
  \freenames{(\textsf{rec} \; X(\vec{x}).P)\langle \vec{y} \rangle}
  := \{ \vec{y} \} \cup \freenames{P} \setminus \{ \vec{x} \}
\end{mathpar}

The bound names of a process, $\boundnames{P}$, are those names occurring in $P$
that are not free. For example, in $x?(y).0$, the name $x$ is free, while $y$ is bound.

\begin{definition}
Then two processes, $P,Q$, are alpha-equivalent if $P = Q\{\vec{y}/\vec{x}\}$ for
some $\vec{x} \in \boundnames{Q},\vec{y} \in \boundnames{P}$, where $Q\{\vec{y}/\vec{x}\}$
denotes the capture-avoiding substitution of $\vec{y}$ for $\vec{x}$ in $Q$.
\end{definition}

\begin{definition}
  The {\em structural congruence} \cite{SangiorgiWalker} , $\equiv$, between processes is the
  least congruence containing alpha-equivalence, satisfying the
  abelian monoid laws (associativity, commutativity and $\pzero$ as
  identity) for parallel composition $|$ and for summation $+$, in addition to the
  following axioms:
\begin{mathpar}
 (\nu \; x)\pzero \equiv \pzero 
 \and
 (\nu \; x)(\nu \; x)P \equiv (\nu \; x)P \and (\nu \; x)(\nu \; y)P \equiv (\nu \; y)(\nu \; x)P 
 \and \\
 P | (\nu \; x)Q \equiv (\nu \; x)(P|Q), \; \mbox{\textit{if} }x \not\in \freenames{P}
 \and
 (\textsf{rec} \; X(\vec{x}).P)\langle \vec{y} \rangle \equiv P\{\vec{y}/\vec{x}\}\{(\textsf{rec} \; X(\vec{x}).P)/X\}
\end{mathpar}
\end{definition}

\subsubsection{Operational semantics} 

Finally, we introduce the computational dynamics. What marks these
algebras as distinct from other more traditionally studied algebraic
structures, e.g. vector spaces or polynomial rings, is the manner in
which dynamics is captured. In traditional structures, dynamics is typically
expressed through morphisms between such structures, as in linear maps
between vector spaces or morphisms between rings. In algebras
associated with the semantics of computation, the dynamics is
expressed as part of the algebraic structure itself, through a
reduction reduction relation typically denoted by $\red$. Below, we
give a recursive presentation of this relation for the calculus used
in the encoding.

\begin{mathpar}
  \inferrule* [lab=Comm] {\vec{y} \cap \vec{v} = \emptyset \\ |\vec{y}| = |\vec{z}|} { x.(\vec{y})P \juxtap x.(\nu \vec{v})[\vec{z}]P \red (\nu \vec{v})(P\{\vec{z}/\vec{y}\} | Q) }
  \and \\
  \inferrule* [lab=Par] {{P} \red {P}'} {{{P} | {Q}} \red {{P}' | {Q}}}
  \and
  \inferrule* [lab=Equiv]{{{P} \scong {P}'} \andalso {{P}' \red {Q}'} \andalso {{Q}' \scong {Q}}}{{P} \red {Q}}
  \and
  \inferrule* [lab=New] {{P} \red {P}'} {{\newp{{x}}{{P}}} \red {\newp{{x}}{{P}'}}}  
\end{mathpar}

We write $\wred$ for $\red^*$, and $P\red$ if $\exists Q $ such that $ P \red Q$.

In closing this summary, we take the opportunity to observe that it is
precisely the dynamics that distinguishes this encoding. The
equivalence that coincides with knot equivalence is a \emph{behavioral}
equivalence, i.e. an equivalence of the dynamics of processes in the
image of the encoding. In a marked departure from Gauss codes or
DT codes or Conway's ``knotation,'' this facet of the encoding affords
the \emph{conflation} of notation scheme with invariant, providing a
framework in which to establish the distinguishing power of the
invariant and a language in which to express classes of knots as
logical properties, as discussed in subsection \ref{sub:characteristic_formulae}.

\subsubsection{Bisimulation}

The computational dynamics gives rise to another kind of equivalence,
the equivalence of computational behavior. As previously mentioned
this is typically captured \emph{via} some form of bisimulation.


The notion we use in this paper is derived from weak barbed
bisimulation \cite{milner91polyadicpi}. We must introduce an ``up to''
\cite{DBLP:conf/concur/SangiorgiM92} \cite{DBLP:conf/concur/Pous06}
strategy to deal with the fact that Reidemeister moves can not only
introduce or eliminate crossings (see $R_1$ $R_2$), but ``reorder''
them (see $R_3$).

\begin{definition}
  An agent $B$ occurs \emph{unguarded} in $A$ if it has an occurence
  in $A$ not guarded by a prefix $x$. A process $P$ is observable at
  $x$, written here $P \downarrow x$, if some agent $x.A$ occurs
  unguarded in $P$. We write $P \Downarrow x$ if there is $Q$ such
  that $P \wred Q$ and $Q \downarrow x$.
\end{definition}

\begin{definition}
A \emph{barbed bisimulation} is a symmetric binary relation 
${\mathcal S}$ between agents such that $P\rel{S}Q$ implies:
\begin{enumerate}
\item If $P \red P'$ then $Q \wred Q'$ and $P'\rel{S} Q'$, for some $Q'$.
\item If $P\downarrow x$, then $Q\Downarrow x$.
\end{enumerate}
$P$ is barbed bisimilar to $Q$, written
$P \simeq Q$, if $P \rel{S} Q$ for a barbed bisimulation ${\mathcal S}$.
\end{definition}


\subsubsection{Contexts}

One of the principle advantages of computational calculi like the
$\pi$-calculus is a well-defined notion of context,
contextual-equivalence and a correlation between
contextual-equivalence and notions of bisimulation. The notion of
context allows the decomposition of a process into (sub-)process and
its syntactic environment, its context. Thus, a context may be
thought of as a process with a ``hole'' (written $\Box$) in it. The
application of a context $M$ to a process $P$, written $M[P]$, is
tantamount to filling the hole in $M$ with $P$. In this paper we do
not need the full weight of this theory, but do make use of the notion
of context in the proof the main theorem. As will be seen, the
Reidmeister moves amount to decomposing the representation of the knot
into some collection of crossings or wires and the rest of the
knot.

\begin{mathpar}
  \inferrule* [lab=summation] {} {{M_{M},M_{N}} \bc \Box \;|\; x.M_{A} \;|\; M_{M}+M_{N}}
  \and
  \inferrule* [lab=agent] {} {{M_{A}} \bc (\vec{x})M_{P} \;| \; [\vec{x}]M_{P}}
  \and \\
  \inferrule* [lab=process] {} {{M_{P}} \bc M_{N} \;| \;P|M_{P} \;| (\textsf{rec} \; X(\vec{x}).M_{P})\langle \vec{y} \rangle \;| \; (\nu \; \vec{x})M_{P}}
\end{mathpar} 

\begin{definition}[contextual application] Given a context $M$, and
  process $P$, we define the \emph{contextual application}, $M[P] :=
  M\{P/\Box\}$. That is, the contextual application of M to P is the
  substitution of $P$ for $\Box$ in $M$.
\end{definition}

\begin{example}\label{example:trefoilcontext} For example, if we take
  \begin{align*}
    M_{3_1} := (v_0 ... v_{5}) & (\nu \; u_0)C(v_0,v_1,v_2,v_3,u_0) \\
    & | W(v_2,v_7) | \Box \\
    & | (\nu \; u_1)C(v_4,v_5,v_6,v_7,u_1) \\
    & | W(v_4,v_9) | W(v_5,v_8) \\
    & | (\nu \; u_2)C(v_8,v_9,v_{10},v_{11},u_2) \\
    & | W(v_{10},v_0) | W(v_{11},v_1)
  \end{align*}
then $\meaningof{3_1} = M_{3_1}[W(v_3,v_6)]$
\end{example}



\section{Main theorem: proof sketch}\label{sub:main_thm_proof_sketch} 

We have a couple of technicalities to dispatch. To motivate the
first of these we wish to note that the arguments for the
forward direction require some care. The aim is to capture the
intuitive equivalence-preserving nature of the Reidemeister moves as
corresponding bisimularity-preserving transformations on processes (in
the image of the encoding). Because of the encoding of crossing
information of wires in terms of synchronization of signal-flow, we
have to introduce ``enough signal'' to keep the knot ``firing'', as it
were, to establish that the process transformations corresponding
to the knot transformations are bisimulation-preserving. Rather than
seeing this extra condition as a weakness of the approach we submit that this feature
provides evidence to our claim that the characterization of ambient isotopy of knots at
work in this encoding is in terms of process dynamics.

We will say that the encoding of a knot is \emph{alive} as long as it
is firing, i.e. the process is enabled to make a reduction step. If it
ever ceases to push signal through, i.e. process cannot make a
reduction step, then it is \emph{dead}.
  
We can ascertain an upperbound on initial signal that guarantees
liveness of the encoding. Surely, the parallel composition of $2\#(K)$
barbs, i.e. two barbs for each crossing, will guarantee the liveness
of the encoding. More declaratively, we simply demand that $
\meaningof{K} | initSignal$ be live before we are willing to admit
it as a representation of the knot.

\begin{definition}
  More precisely, we will call the pair
  $(\meaningof{K},initSignal)$ \emph{alive} if $initSignal$ is
  an abstraction over a parallel composition of barbs, with
  $|initSignal| = |\meaningof{K}|$, we demand that for any vector
  of distinct names, $v$ with $|v| = |\meaningof{K}|$ and any state,
  $K'$, such that $\meaningof{K}\langle v \rangle | initSignal
  \langle v \rangle \wred K'$ we have that $K' \red$.
\end{definition}

\paragraph*{Different crossing numbers mean different numbers of free
  names.}
Knots in the same isotopy class may have diagrams with different numbers of
crossings. Different numbers of crossings lead to different arities
in the abstractions, so in interpreting these knots we haveto work to properly capture the notion of equivalence. While the precise statement
is somewhat technical, the intuition is simple: it is possible to find
a common ``interface'', i.e. argument list, between the two knots such
that restricting to that argument list obtains bisimilar processes.

Imagine (the processes that interpret) knots as programs housed in
boxes with ports on the perimeter. Two knot diagrams $K_i, i \in \{1,2\}$, from
the same isotopy class may have different crossing numbers and thus
their boxes have different numbers $\#(K_i)$ of ports on the outside. We can find a number of ports, call it $n$, somewhere
between the minimal crossing number of the isotopy class and the
lesser of $\#(K_i)$ such that if -- using restriction -- we close off
$\#(K_1) - n$ ports on the first box and $\#(K_2) - n$ on the second
we get two boxes that perform the same observable set of signal
processing steps.

Formally, suppose $K_{1} \sim K_{2}$ and let $\#_{Min}(K) :=
min\{\#(K') : K' \sim K \}$. We assert that there is an $n$ such that   
$4\#_{Min}(K_1) \leq n \leq 4*min\{ \#(K_1), \#(K_2) \}$ and for
any vector of names, $\vec{v}$, with $|\vec{v}| = n$ and $v[i] \neq
v[j] \iff i \neq j $, there exists two vectors of names, $\vec{w_1},
\vec{w_2}$, also all distinct, such that
\begin{eqnarray}
    (\nu \; \vec{w_1})\meaningof{K_{1}}\langle \vec{v}:\vec{w_1} \rangle & \simeq & (\nu \; \vec{w_2})\meaningof{K_{2}}\langle \vec{v}:\vec{w_2} \rangle \nonumber
  \end{eqnarray}
  with $|\vec{w_i}| = 4\#(K_{i}) - n$.

  \paragraph{Prime versus composite knots.} Finally, we need to say a
  little about how we obtain a $\pi$-calculus expression for a given
  knot diagram. We use another bootstrapping procedure, beginning with another
  knot notation (Dowker-Thistlethwaite codes is used here) and
  exhibiting an algorithm for calculating a process expression from the chosen notation scheme. The reader will
  note that DT codes are  unique only when restricted to the class of prime knots. It turns out
  that our encoding preserves knot composition. In fact, knot
  composition turns out to be a specialized form of a procedure,
  parallel composition + hiding, long-investigated in the
  process-algebraic setting. So, it is sufficient to demonstrate the
  encoding for prime knots.

\subsection{Forward direction. $ K_1 \sim K_2  \implies  \meaningof{K_1} \simeq \meaningof{K_2}$}

\paragraph*{Strategy and intuitions.} Since $K_1 \sim K_2$ we know
there is a sequence of Reidemeister moves converting $K_1$ to 
$K_2$. Each move is proved to correspond to a bisimilarity-preserving
transformation on a related process. We establish context and
substitution lemmas and as a consequence obtain that the process operations
corresponding to the Reidemeister moves preserve bisimularity. As
noted above, a small amount of bookkeeping is required to iteratively
apply these transformations to mirror Reidemeister moves as applied in a
proof of ambient isotopy of two knots.

We begin by observing that the intuition behind the Reidmeister moves
is fundamentally about performing local operations, i.e. on
some subset of wires or crossings, while leaving the rest of the
process unchanged. We interpret this notion of a ``local operation'',
say $R$, on a knot diagram $K$ schematically as follows.

\begin{itemize}
\item factor the process, $\meaningof{K}$, as $M[\Pi_iC_i | \Pi_jW_j]$
  where $\Pi_iC_i | \Pi_iW_i$ encodes the set of crossings or wires
  to be modified by $R$, i.e. the left side of the move to be
  performed, and $M$ is the context representing the unchanged portion of the process;
\item letting $R^{\to}(K)$ (resp. $R^{\leftarrow}(K)$) denote the
  left-to-right (resp. right-to-left) application of the move R to
  $K$, then $\meaningof{R^{\to}(K)}$ is calculated as
  $M[\Pi_{i'}C_{i'}' | \Pi_{j'}W_{j'}]$, where $\Pi_{i'}C_{i'}' |
  \Pi_{j'}W_{j'}$ are the processes interpreting the modified set of
  crossings or wires, i.e. the right side of the move to be performed.
\end{itemize}

The mathematical content of these statements is that the encoding
naturally extends to an encoding $\meaningof{R_{i}\{L,R\}}$ of the left and right hand sides of each
Reidemeister move such that $\meaningof{K} =
M[\meaningof{R_{i}L}]$ (resp. $M[\meaningof{R_{i}R}]$) and
$\meaningof{R_{i}^{\to}(K)} = M[\meaningof{R_{i}R}]$
(resp. $\meaningof{R_{i}^{\leftarrow}(K)} = M[\meaningof{R_{i}L}]$). Here, a picture really is worth a thousand words (see figure
\ref{fig:RMovesAsXforms}).  

\begin{example} For example,  as in
  \ref{example:trefoilcontext} taking $M_{3_1}$ to be  the encoding of $R_1(3_1,W(v_3,v_6))$,
  the application of $R_1$ to $3_1$ at to the strand corresponding to
  the wire process $W(v_3,v_6)$ is given by
  $\meaningof{R_1(3_1,W(v_3,v_6))} = (\nu \; x_{0} \; x_{1} \; y_{0}
  \; y_{1})M_{3_1}[((\nu \; u)C(x_{0},x_{1},y_{0},y_{1},u) |
  W(x_{0},x_{1}) | W(y_{0},v_{3}) | W(y_{1},v_{6}))]$. See figure
  \ref{fig:TrefoilContextIllustration}.
  
  \begin{figure}[tbp]
    \centering
    \scalebox{0.30}[0.300]{\includegraphics[viewport=30 30 810 550]{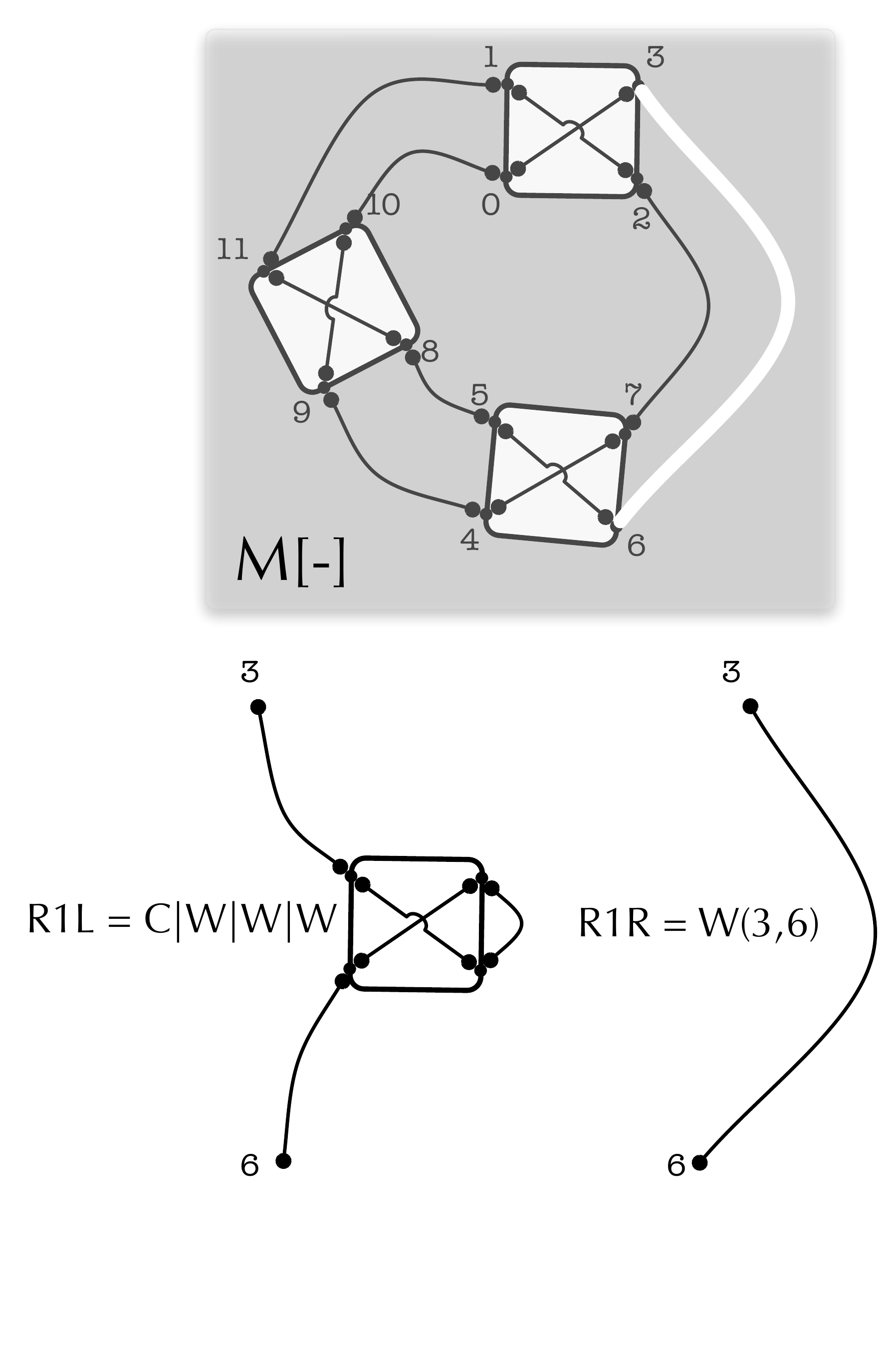}}
    \caption{The figure illustrates a Reidemeister move of the first type as reflected in the context $M$ of the $3_1$  knot, as given in Ex. \ref{example:trefoilcontext}. }
  \label{fig:TrefoilContextIllustration}
\end{figure}
\end{example}

A certain discipline is required in the extended encoding. To
establish our substitution and context lemmas we have to keep the
\textit{interface} (\emph{i.e.} the ports in the process expression
corresponding to the splice points) of the left and right hand sides
of the R-move the same. So, for $R_{1}L$ and $R_{2}L$ we must restrict the ports
that are not the splice points. One way to address this is to embed
the restrictions into the encodings of $R_{1}L$ and $R_{2}L$. Algebraically,


\begin{align*}
  \meaningof{R_{1}L}(y0,y1) = & \\
  (\nu \; x_{0} \; x_{1} \; w_{0} \; w_{1} ) & (W(y_{0},w_{0}) \\
  & |(\nu \; u)C(x_{0},x_{1},w_{0},w_{1},u) | W(x_{0},x_{1}) \\
  & | W(y_{1},w_{1})) \\
  \meaningof{R_{2}L}(x_{00},x_{01},x_{10},x_{11}) = & \\
  (\nu \; y_{00},y_{01},y_{10},y_{11},w_{00},w_{01},w_{10},w_{11}) & (W(x_{00},w_{00}) | W(x_{01},w_{01}) \\
  & | (\nu \; u_{0})C(w_{00},w_{01},y_{00},y_{01},u{0}) \\
  & | W(y_{00},y_{11}) | W(y_{01},y_{10}) \\
  & | (\nu \; u_{1})C(w_{10},w_{11},y_{10},y_{11},u_1) \\
  & | W(x_{10},w_{10}) | W(x_{11},w_{11}))
\end{align*}

  Keeping to the notion of equivalence outlined in the section above,
  however, it will be convenient to factor out the restrictions as in
  the example above.

\begin{figure}[tbp]

\center\includegraphics[width=3in]{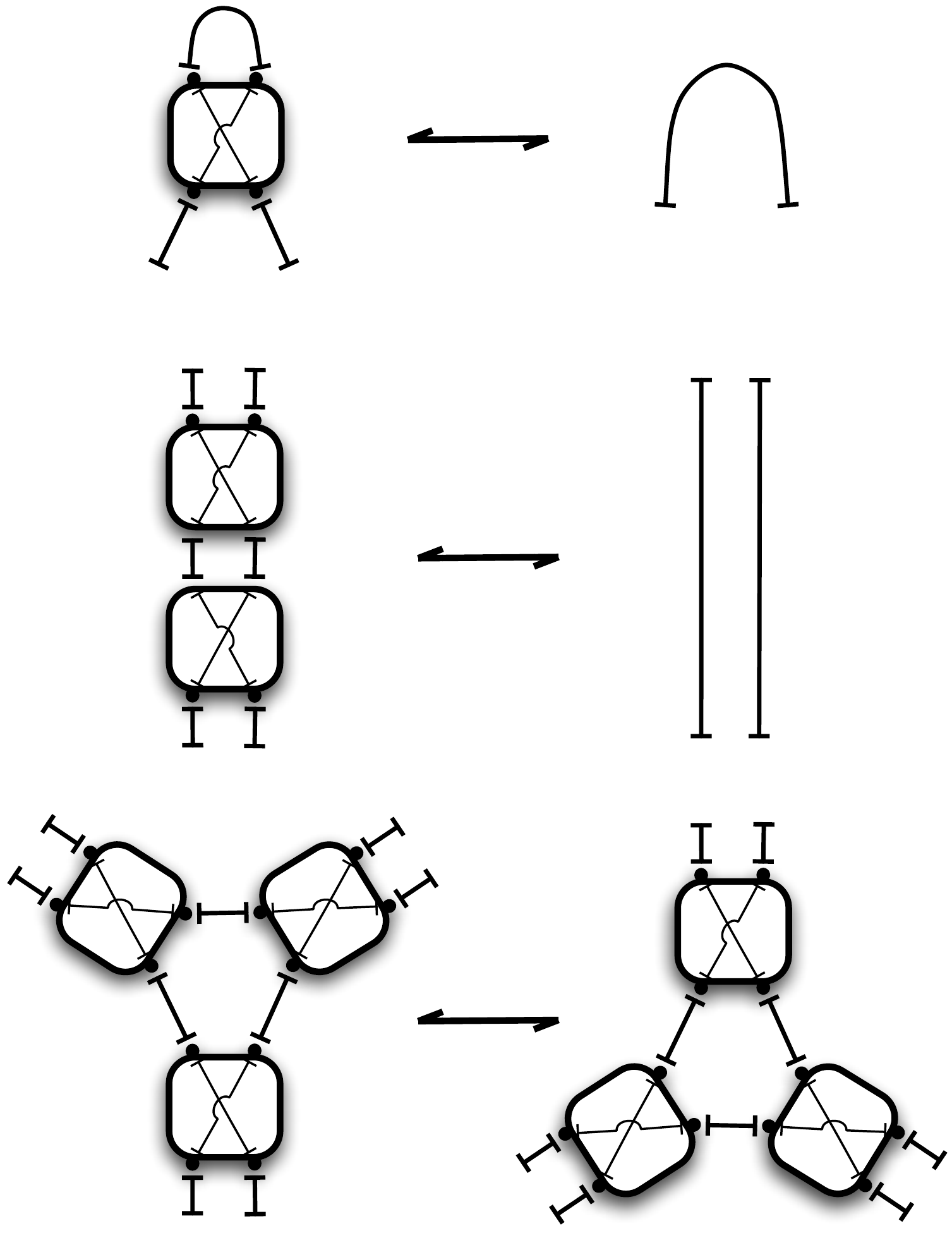}  
\caption{ Reidemeister moves as bisimilarity-preserving transformations. Cf. Fig. \ref{fig:RMoves}}

\label{fig:RMovesAsXforms}
\end{figure}

\begin{lemma}[context]
$\forall i \in \{ 1, 2, 3 \}$ if $K_{1} \stackrel{R_{i}}{\rightarrow}
K_{2}$ then there exists a context $M$ and (possibly empty) vector of
distinct names $\vec{w}$ such that
  \begin{eqnarray}
    (\nu \; \vec{w})\meaningof{K_{1}}\langle v:w\rangle & = & (\nu \; \vec{w})M[ \meaningof{R_{i}L} ] \nonumber \\
    \meaningof{K_{2}} & = & M[ \meaningof{R_{i}R} ] \nonumber
  \end{eqnarray}
\label{context}
\end{lemma}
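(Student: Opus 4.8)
The plan is to prove the lemma one Reidemeister move at a time, treating each $i \in \{1,2,3\}$ separately and exploiting the fact, emphasized above, that a Reidemeister move is a \emph{local} rewrite: it alters only the crossings and wires lying inside the scope lens, while the remainder of the diagram is copied verbatim. Since $\meaningof{K}$ is by definition a parallel composition $\Pi_i \meaningof{C(i)} \parop \Pi_j W_j$ guarded by a single outer abstraction, locality on diagrams translates directly into a syntactic factorization of processes: the untouched crossings and wires will constitute the context $M$, and the rewritten region will be the hole, filled by $\meaningof{R_i L}$ to recover $K_1$ and by $\meaningof{R_i R}$ to recover $K_2$. Throughout, equality is read as equality in $\mathbb{P}$, i.e.\ structural congruence $\scong$.

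First I would isolate the rewritten region inside $\meaningof{K_1}$. Using associativity and commutativity of parallel composition (part of structural congruence) I float the crossing and wire processes realizing the $R_i L$ pattern, at the splice points where the move is applied, into a single contiguous parallel sub-term; then I apply scope extrusion, $P \parop (\nu\,x)Q \scong (\nu\,x)(P \parop Q)$, to draw the restrictions binding the names internal to that region inward, so that the sub-term matches the definition of $\meaningof{R_i L}$ given above, whose embedded restrictions hide everything but the interface. Replacing this sub-term by $\Box$ defines $M$, and reading the factorization back yields $(\nu\,\vec{w})\meaningof{K_1}\langle v:w\rangle \scong (\nu\,\vec{w})M[\meaningof{R_i L}]$. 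For the second equation I use that $K_2 = R_i^{\to}(K_1)$ is obtained from $K_1$ by substituting the $R_i R$ region for the $R_i L$ region at the very same splice points; since $M$ encodes exactly the portion of the diagram left unchanged, and since $\meaningof{R_i L}$ and $\meaningof{R_i R}$ are abstractions over the \emph{same} interface of splice-point names (this is precisely the ``certain discipline'' enforced above), plugging $\meaningof{R_i R}$ into $M$ reconstructs $\meaningof{K_2}$ on the nose.

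The vector $\vec{w}$ records the difference in crossing number: $|\vec{w}| = 4(\#(K_1) - \#(K_2))$, namely the four names per eliminated crossing that occur in $\meaningof{R_i L}$ but have no counterpart in $\meaningof{R_i R}$. Thus $\vec{w}$ has length $4$ for $R_1$, length $8$ for $R_2$, and is empty for $R_3$, which preserves crossing number; restricting these names on the $K_1$ side is what realigns its interface with the shorter argument list of $K_2$, as anticipated in the discussion of differing crossing numbers.

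The main obstacle is the name bookkeeping at the boundary between $M$ and the hole. Two points require care. First, the scope-extrusion step is legal only because the names internal to the rewritten region do not occur free in $M$; I would verify this from the indexing function $\omega$, which guarantees that each internal wire-name is shared by exactly the two processes of the rewritten region and by nothing in the surrounding context. Second, I must confirm that the splice-point names exposed by $\meaningof{R_i L}$ coincide with those exposed by $\meaningof{R_i R}$, so that the single context $M$ admits both fillings. The delicate case is $R_3$: because the move reorders three crossings rather than creating or destroying any, one must exhibit the three crossings together with their connecting wires as a genuine common sub-pattern and check that the two sides present identical interfaces despite the strand being slid from one side of the crossing to the other.
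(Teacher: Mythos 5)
Your proposal is correct and follows essentially the same route as the paper, which simply asserts that the lemma ``follows directly from the definition of the encoding'': your factorization of $\meaningof{K_1}$ via associativity, commutativity, and scope extrusion of structural congruence, with $M$ as the untouched remainder and $\vec{w}$ accounting for the $4(\#(K_1)-\#(K_2))$ surplus port names, is precisely the unpacking of that one-line argument. Your attention to the shared splice-point interface and to the $R_3$ case is a welcome elaboration of details the paper leaves implicit.
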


\begin{proof}
  This follows directly from the definition of the encoding.
\end{proof}

\begin{lemma}[substitution]
  For all $i \in \{ 1, 2, 3 \}$ $R_{i}L$ is bisimilar to $R_{i}R$ in the
  context of a live encoding. That is, if
  \begin{itemize}
    \item $\meaningof{K} | initSignal$ is alive, and
    \item $\meaningof{K} | initSignal = M[ \meaningof{R_{i}L} ]$
  \end{itemize}
for some context $M$ then we can substitute $\meaningof{R_{i}R}$ in
  its place without change of behavior, i.e.
  \begin{eqnarray}
    (\nu \; \vec{w})M[ \meaningof{R_{i}L} ] & \simeq & M[ \meaningof{R_{i}R} ] \; ,\ \forall i \in \{ 1, 2, 3 \} \nonumber
  \end{eqnarray}
\end{lemma}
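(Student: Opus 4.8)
The plan is to prove the local equivalence $(\nu \; \vec{w})\meaningof{R_{i}L} \simeq \meaningof{R_{i}R}$ for each $i \in \{1,2,3\}$ and then lift it through the context $M$. The reduction to the local statement rests on the observation that $M$ communicates with the plugged sub-knot only through its \emph{interface}: by the discipline imposed on the extended encodings above, the only free names shared between the plugged sub-process and $M$ are the splice points ($y_0,y_1$ for $R_1$, and the corresponding port vectors for $R_2, R_3$), while every other name is restricted inside $\meaningof{R_{i}L}$ and $\meaningof{R_{i}R}$. The outer restriction $\nu \vec{w}$ plays a complementary role: when a move changes the crossing number (as $R_1$ and $R_2$ do), the two sides expose different numbers of ports, and $\vec{w}$ closes off exactly the surplus ports so that both sides present \emph{the same} interface to $M$. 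Consequently a weak barbed bisimulation witnessing the local equivalence --- one that matches barbs exactly at the splice points and matches every internal reduction weakly --- remains a weak barbed bisimulation when closed under $M$ and $\nu \vec{w}$. This is the up-to-context step, and the \emph{alive} hypothesis on $\meaningof{K} \,|\, initSignal$ is what makes it sound: it guarantees that in every reachable state the plugged sub-process can be kept firing, so no context-induced deadlock can break the matching.

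For the local equivalences of $R_1$ and $R_2$ I would exploit the buffer characterisation of wires from Section \ref{sub:wirings}. The composite on the left of $R_1$ --- a loop crossing wired to the two splice points --- realises a lossless, infinite-capacity buffer between $y_0$ and $y_1$, exactly as the single wire $W(y_0,y_1)$ on the right does; for $R_2$ the two stacked crossings collapse to the pair of through-wires on the right. In each case the witnessing relation $\mathcal{S}$ pairs a left-hand state with a right-hand state whenever the two hold the same in-flight signals at the interface, with the internal $Cell$-chains and synchronizer states matched so that each left reduction is either silent internal buffer bookkeeping (answered by an empty weak step on the right) or a genuine interface event (answered by the corresponding buffer step), and symmetrically. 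The liveness condition enters precisely here: the over-crossing branches of $C$ (its third and fourth summands, which block on the synchronizer $u$) can always be released because $initSignal$ keeps a token available, so signals are never trapped and the two buffer behaviours coincide.

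The hard case is $R_3$, and I expect it to be the main obstacle. Here three crossings are rearranged rather than created or destroyed, so both sides carry the same synchronizers and the relation must track all three synchronizer states at once while certifying that the over/under ordering of the three strands agrees across the two sides. The difficulty is the reduction-matching condition over the interleavings of three concurrently firing crossings: one must show that whatever order signals thread through the crossings on the left can be answered by a weak reduction sequence on the right landing in an $\mathcal{S}$-related state, and conversely, with no synchronizer ever blocking (again guaranteed by liveness). I would tame the combinatorial blow-up with a bisimulation up-to structural congruence and up-to the liveness signal \cite{DBLP:conf/concur/SangiorgiM92}, so that the relation need only be closed under the residual signal-flow patterns rather than all reachable configurations. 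With the three local equivalences in hand, the context-closure argument of the first paragraph yields $(\nu \; \vec{w})M[\meaningof{R_{i}L}] \simeq M[\meaningof{R_{i}R}]$ for every $i$, completing the lemma.
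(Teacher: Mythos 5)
Your proposal is correct and follows essentially the same route the paper takes --- local bisimilarity of $\meaningof{R_{i}L}$ and $\meaningof{R_{i}R}$ justified by the buffer characterisation of wires, lifted through $M$ by an up-to-context argument, with the \emph{alive} hypothesis supplying the signal needed to release the synchronizer-guarded branches. The paper's own proof is only a two-sentence assertion that the claim ``follows from the definitions plus liveness,'' so your sketch in fact supplies considerably more of the missing detail (the explicit witnessing relations, the interface discipline justifying context closure, and the identification of $R_3$ as the case requiring up-to techniques) than the paper itself does.
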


\begin{proof}
  This follows from the definitions of $\meaningof{R_{i}\{L,R\}}$ plus
  the requirement that $M$ derives from a live encoding. Even if one
  side has additional synchronizations, there is always enough signal
  to overcome spurious blocking.
\end{proof}

An immediate consequence of these two lemmas is 

\begin{lemma}[1-step]
  if $K_{1}$ can be derived from $K_{2}$ by application of one Reidemeister move, then 

  \begin{eqnarray}
    \meaningof{K_{1}}\langle v \rangle & \simeq & (\nu \; w)\meaningof{K_{2}}\langle v:w \rangle \nonumber
  \end{eqnarray}
\end{lemma}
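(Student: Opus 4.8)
The plan is to treat the statement as a single oriented Reidemeister step and to chain the two preceding lemmas. Since the conclusion restricts the vector $w$ on the $\meaningof{K_{2}}$ side and applies $\meaningof{K_{2}}$ to the longer argument $v:w$, the statement presupposes that $K_{2}$ is the diagram with (weakly) more crossings; concretely I read the step as a reduction $K_{2} \stackrel{R_{i}}{\rightarrow} K_{1}$, so that $K_{2}$ carries the ``heavy'' side $R_{i}L$ and $K_{1}$ the ``light'' side $R_{i}R$. For $R_{1}$ and $R_{2}$ this is the crossing-eliminating direction and $|\vec{w}| = 4(\#(K_{2}) - \#(K_{1}))$ equals $4$ resp.\ $8$; for $R_{3}$ the crossing number is unchanged and $\vec{w}$ is empty. (If the move instead adds crossings, the same argument applies after exchanging the names $K_{1},K_{2}$ and using symmetry of $\simeq$.) This choice makes $v:w$ have length exactly $4\#(K_{2})$, which is what renders $\meaningof{K_{2}}\langle v:w\rangle$ well typed.

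With this orientation I would invoke the context lemma (Lemma~\ref{context}) with its $K_{1}$ instantiated to our $K_{2}$ and its $K_{2}$ to our $K_{1}$, obtaining a context $M$ and a vector of distinct names $\vec{w}$, disjoint from $v$, with
\begin{eqnarray*}
  (\nu \; \vec{w})\meaningof{K_{2}}\langle v:w \rangle & = & (\nu \; \vec{w})M[\meaningof{R_{i}L}], \\
  \meaningof{K_{1}} & = & M[\meaningof{R_{i}R}].
\end{eqnarray*}
Both lines are structural-congruence identities, so they may be used to rewrite either side freely. Applying the substitution lemma to this same context $M$ yields $(\nu \; \vec{w})M[\meaningof{R_{i}L}] \simeq M[\meaningof{R_{i}R}]$, and splicing the three facts together gives
\[
  (\nu \; \vec{w})\meaningof{K_{2}}\langle v:w \rangle = (\nu \; \vec{w})M[\meaningof{R_{i}L}] \simeq M[\meaningof{R_{i}R}] = \meaningof{K_{1}}\langle v \rangle ,
\]
whence symmetry of $\simeq$ delivers the stated conclusion.

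The step I expect to be the genuine obstacle --- the one the word ``immediate'' glosses over --- is reconciling the liveness hypothesis of the substitution lemma with the bare statement here. The substitution lemma is only available when $M$ arises from a \emph{live} encoding, that is when $M[\meaningof{R_{i}L}]$ has the form $\meaningof{K}\,|\,initSignal$ with $(\meaningof{K},initSignal)$ alive, whereas the 1-step statement names no $initSignal$ at all. Bridging this demands either reading the $\simeq$ of the conclusion up to a fixed initial signal of $2\#(K)$ barbs, so that both sides are tacitly composed with the \emph{same} $initSignal$ before comparison, or else arguing that the barbs supplied by $initSignal$ are matched identically across the move and may therefore be cancelled from the bisimulation. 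Either way one must verify that an initial signal keeping $\meaningof{K_{2}}$ firing also keeps $\meaningof{K_{1}}$ firing and conversely, so that the liveness precondition transfers along the step; this is exactly where the ``enough signal to overcome spurious blocking'' remark of the substitution lemma is doing the work, since the two sides of an $R_{i}$ move differ in the number of internal synchronizations. The remaining bookkeeping --- choosing $\vec{w}$ distinct and disjoint from $v$, and matching arities so that $\meaningof{K_{2}}\langle v:w\rangle$ is well formed --- is routine once the orientation and the value of $|\vec{w}|$ are fixed as above.
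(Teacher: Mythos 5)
Your proof is correct and follows exactly the route the paper intends: the paper offers no separate argument for this lemma, simply declaring it ``an immediate consequence'' of the context and substitution lemmas, which is precisely the chaining you carry out (including the correct re-orientation, instantiating the context lemma with its $K_{1}$ as your $K_{2}$ so the restricted names sit on the heavier side). Your discussion of the missing $initSignal$/liveness precondition flags a genuine looseness that the paper itself glosses over rather than resolves, so you have if anything been more careful than the source.
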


We are almost in a position to complete the proof of the forward
implication. The next step is to show that you can iterate the
performance of these bisimilarity-preserving transformations in a way
that precisely mimics iterated application of Reidemeister
moves. Because the moves $R_{1}$ and $R_{2}$ change the number of
crossings the number of ports in the interface of the (encoding of
the) knot to which they are applied changes. This is the core issue in
the formal statement of the theorem. Because the splice interface
remains constant across the two sides of a Reidemeister move all one
has to do is keep track of the ports to be hidden. The complete proof
uses a case analysis of the composition of any two types of
Reidemeister moves. We illustrate the analysis in the case of a
simplifying (crossing-elimination) $R_1$ step followed by a complicating
(crossing-introducing) $R_2$ step.

\begin{itemize}
   \item The $R_{1}L$ $\to$ $R_{1}R$ step means we have a context $M$ such that
     \begin{eqnarray}
       (\nu \; x_0 \; x_1 \; w_0 \; w_1)\lefteqn{\meaningof{K_{1}}\langle \vec{v_0}:x_0 : x_1 : w_0 : w_1\rangle} \nonumber \\
       & & = (\nu \; x_0 \; x_1 \; w_0 \; w_1)M[ \meaningof{R_{1}L} ] \nonumber \\
       & & \simeq M[ \meaningof{R_{1}R} ] \nonumber \\
       & & = \meaningof{K_{2}}\langle \vec{v_0} \rangle. \nonumber
     \end{eqnarray}
   \end{itemize}

   \begin{itemize}
   \item The  $R_{2}R$ $\to$ $R_{2}L$ step means we have a context $M'$ such that
     \begin{eqnarray}
       (\nu \; y_{00} \ldots y_{11} \; w_{00} \ldots w_{11})\lefteqn{\meaningof{K_{3}}\langle \vec{v_1}:y_{00}:\ldots : y_{11} : w_{00} : \ldots : w_{11} \rangle} \nonumber \\
       & & = (\nu \; y_{00} \ldots y_{11} \; w_{00} \ldots w_{11})M'[ \meaningof{R_{2}L} ] \nonumber \\
       & & \simeq M'[ \meaningof{R_{1}R} ] \nonumber \\
       & & = \meaningof{K_{2}}\langle \vec{v_1} \rangle. \nonumber
     \end{eqnarray}
   \end{itemize}

\begin{itemize}
     \item Since $\vec{v_0}, \vec{v_1}$ are just lists of distinct names with
      $|\vec{v_0}| = |\meaningof{K_{2}}| = |\vec{v_1}|$, just pick $\vec{v_0} = \vec{v_1}$. Dropping the subscript, we conclude 
     \begin{eqnarray}
       \lefteqn{(\nu \; x_0 \; x_1 \; w_0 \; w_1)\meaningof{K_{1}}\langle \vec{v}:x_0 : x_1 : w_0 : w_1 \rangle \simeq} \nonumber \\
       & & (\nu \; y_{00} \ldots y_{11} \; w_{00} \ldots w_{11})\meaningof{K_{3}}\langle \vec{v_1}:y_{00}:\ldots : y_{11} : w_{00} : \ldots : w_{11} \rangle \nonumber
     \end{eqnarray}
     \item Moreover, we have  $\meaningof{K_{2}}\langle \vec{v} \rangle$ forming the shared core.
   \end{itemize}

The attentive reader might wonder how R3 works.  Indeed, Perko pairs provide a key counter example to using restriction. This is where bisimulation really shows it's value -- in the form of its flexibility. The outline follows the proof technique of bisimulation-up-to (in this case, up to context), found in \cite{Sangiorgi95} after noting that that there are two simple paths in the knot encoding between any given pair of distinct ports.

\subsection{Reverse direction. $ K_1 \sim K_2  \Leftarrow  \meaningof{K_1} \simeq \meaningof{K_2}$}

We prove this by contradiction,
assuming two knots in distinct isotopy classes with bisimilar images
and arriving at absurdity. Not surprisingly, the argument in this
direction is considerably less complicated as it is nonconstructive.

Without loss of generality (by application of the lemmas of the forward direction as needed), we assume knots are given in minimal
crossing diagrams. Due to this minimality, if the crossing
numbers of these diagrams are different, then the number of free names (or arity of the
abstractions interpreting the knots)  in  $\meaningof{K_1}$  differs from those  in $\meaningof{K_2}$ --contradicting
bisimilarity. Thus, the crossing numbers must be the same. 

Now we employ the form of the encoding: $\Pi_{i=0}^{n-1}
\meaningof{C(i)}(...) | \Pi_{i=0}^{n-1} W(...)|W(...) \simeq
\Pi_{j=0}^{n-1} \meaningof{C(j)}(...) | \Pi_{j=0}^{n-1}
W'(...)|W'(...)$. Notice that a consequence of sharing the same
crossing number is that the crossing process part of the two encodings
is \emph{identical}. Because bisimulation is a congruence this implies
$\Pi_{i=0}^{n-1} W(...)|W(...) \simeq \Pi_{j=0}^{n-1}
W'(...)|W'(...)$. This says that the only difference can be in the
``wiring harnesses''. Obviously, if any of these wires differ (up to $\alpha$-equivalence), then there
is a  distinguishing barb, contradicting our assumption of
bisimulation.  Hence none of the wires differ, their respective sets of crossings are wired
identically, which means the diagrams are identical. Thus, the knots are ambiently isotopic -- a contradiction.

%
%
%
%
%


\section{Unlikely characters: spatial logic for
  knots}\label{sub:characteristic_formulae} 

Associated to the mobile process calculi are a family of logics known
as the Hennessy-Milner logics. These logics typically enjoy a
semantics interpreting formulae as sets of processes that when
factored through the encoding outlined above allows an identification
of classes of knots with logical formulae. In the context of this
encoding the sub-family known as the spatial logics \cite{CairesC03}
\cite{CairesC04} \cite{Caires04} are of particular interest providing
several important features for expressing and reasoning about
properties (i.e. classes) of knots. We hint here at how this may be done.

\subsubsection{Structural connectives} The spatial logics enjoy
structural connectives corresponding, at the logical level, to the
parallel composition ($P | Q$) and new name ($(\nu \; x)P$)
connectives for processes. As illustrated in the examples below, these
connectives are extremely expressive given the shape of our encoding.

\subsubsection{Decideable satisfaction}
In \cite{Caires04} the satisfaction relation is shown to be decideable
for a rich class of processes. It further turns out that the image of
the our encoding is a proper subset of that class. This result
provides the basis for an algorithm by which to search for knots
enjoying a given property.

\subsubsection{Characteristic formulae}
In the same paper \cite{Caires04} , Caires presents a means of calculating
characteristic formulae, selecting equivalence classes of processes
up to a pre--specified depth limit on the support set of names. Composed with our
encoding, this characteristic formula can be used to select
characteristic formulae for knots.

\subsubsection{Spatial logic formulae}

The grammar below (segmented for comprehension) summarizes the syntax
of spatial logic formulae. We employ illustrative examples in the
sequel to provide an intuitive understanding of their meaning
referring the reader to \cite{Caires04} for a more detailed explication
of the semantics.

\begin{mathpar}
  \inferrule* [lab=boolean] {} {{A,B} \bc T \;|\; \neg A \;|\; A \wedge B \;|\; \eta = \eta'}
  \and
  \inferrule* [lab=spatial] {} {|\; \pzero \;|\; A | B \;|\; x \text{\textregistered} A \;|\; \forall x . A \;|\;  H x . A}
  \and
  \inferrule* [lab=behavioral] {} {|\; \alpha . A}
  \and 
  \inferrule* [lab=recursion] {} {|\; X(\vec{u}) \;|\; \mu X(\vec{u}) . A}
  \and
  \inferrule* [lab=action] {} {\alpha \bc \langle x?(\vec{y}) \rangle \;|\; \langle x!(\vec{y}) \rangle \;|\; \langle \tau \rangle}
  \and 
  \inferrule* [lab=name] {} {\eta \bc x \;|\; \tau}
\end{mathpar} 


\subsection{Example formulae}\label{sub:example_formulae_} 

\subsubsection{Crossing as formula.}
%

\begin{align*}
 \mu C(x_{0},x_{1},y_{0},y_{1},u).&(\langle x_{0}?(z) \rangle(\langle u! \rangle\langle y_{1}!z \rangle C(x_{0},x_{1},y_{0},y_{1},u)) & \\
  & \wedge \langle y_{1}?(z) \rangle (\langle u! \rangle \langle x_{0}!z \rangle C(x_{0},x_{1},y_{0},y_{1},u)) & \\
  & \wedge \langle x_{1}?(z) \rangle (\langle u? \rangle \langle y_{0}!z \rangle C(x_{0},x_{1},y_{0},y_{1},u)) & \\
  & \wedge \langle y_{0}?(z) \rangle (\langle u? \rangle \langle x_{1}!z \rangle C(x_{0},x_{1},y_{0},y_{1},u))) &
\end{align*}

The lexicographical similarity between the shape of this formulae and
the shape of definition of the process representing a crossing reveals
the intuitive meaning of this formulae. It describes the capabilities
of a process that has the right to represent a crossing. For example
it picks out processes that may perform an input on the port $x_0$ in
its initial menu of capabilities. What differentiates the formula
from the process, however, is that the crossing process is the
smallest candidate to satisfy the formula. Infinitely many other
processes -- with internal behavior hidden behind this interface, so
to speak -- also satisfy this formula. Even this simple formula,
then, can be seen to open a new view onto knots, providing a
computational interpretation of \emph{virtual} knots.

Note that this formula is derived by hand. A similar formula can be
derived by employing Caires' calculation of characteristic formula
\cite{Caires04} to the process representing a crossing. In light of
this discussion, we let
$\meaningof{C}_{\phi}(x0,x1,y0,y1,u)$ denote a formula specifying the
dynamics we wish to capture of a crossing. To guarantee we preserve
the shape of the interface and minimal semantics we demand that
$\meaningof{C}_{\phi}(x0,x1,y0,y1,u) \Rightarrow
\textbf{C}(x0,x1,y0,y1,u)$ where $\textbf{C}(x0,x1,y0,y1,u)$ denotes
the formula above.
                            
\subsubsection{Crossing number constraints.}
The moral content of the context lemma (Lemma \ref{context}) is that the notion of
``locality'' in the Reidemeister moves is effectively captured by the
parallel composition operator of the process calculus. This intuition
extends through the logic. Given a formula,
$\meaningof{C}_{\phi}(x0,x1,y0,y1,u)$, we can use the structural
connectives to specify constraints on crossing numbers, such as at
least $n$ crossings, or exactly $n$ crossings.
\begin{mathpar}
  \inferrule* [lab=at-least-n] {} { K^{\geq n}_{\phi}(\vec{xs},\vec{ys}) := \Pi_{i=0}^{n-1} Hu . \meaningof{C}_{\phi}(xs_i,ys_i,u) | T }
  \and 
  \inferrule* [lab=exactly-n] {} { K^{= n}_{\phi}(\vec{xs},\vec{ys}) := \Pi_{i=0}^{n-1} Hu . \meaningof{C}_{\phi}(xs_i,ys_i,u) | \neg (\forall x_0,y_0,x_1,y_1,u . \meaningof{C}_{\phi}(x_0,y_0,x_1,y_1,u) | T) }
\end{mathpar}

To round out this section, recall that the encoding of an $n$-crossing
knot decomposes into a parallel composition of $n$ \emph{copies} of a
crossing process together with a wiring harness. To specify different
knot classes with the same crossing number amounts to specifying
logical constraints on the wiring harness. In the interest of space,
we defer examples to a forthcoming paper. Suffice it to say that both
the conditions ``alternating knot'' and ``contains the tangle
corresponding to 5/3'' are expressible. For example, it is possible to
calculate the characteristic formula of a process corresponding to the
tangle 5/3 and conjoin it into the classifying formula via the
composition connective of the logic.

Finally, we wish to observe that it is entirely within reason to
contemplate a more domain-specific version of spatial logic tailored
to the shape of processes in the image of the encoding. Such a
domain-specific logic would have a better claim to the title formal
language of knot properties.




\section{Conclusions and future work}

\subsection{Knotation system properties}\label{sub:knot_system_props_recap} 
Rather than list the properties of section
\ref{sub:desirable_properties_for_a_knot_notation_system_} and tick
them off, as we may do, let us take a step back. In the end it is the
act of calculating over knots -- tabulating, searching, making new
ones from old -- that places demands on the notation system used to
represent them. These are the practical considerations that give rise
to the properties listed in
\ref{sub:desirable_properties_for_a_knot_notation_system_}. Establishing
the tight correspondence between processes in the image of the
encoding and the knots they represent is ultimately in service of
showing this is a reasonable proxy for knots because calculations on
the representation have a correspondence to calculations on the
domain. Moreover, the last few decades of logic in computer science
has taught us that there is fidelity and then there is
\emph{fidelity}. It's one thing map elements of one domain to elements
of another, but it's another to map the procedures of one domain
(roughly, operations on elements) to the procedures of
another. Whether one sees a demand like this through the lens of
``functoriality'' or through a correspondence of proofs (of say,
ambient isotopy, in a proof system made of Reidmeister moves) and
programs (that transform programs), the practical side of the demand
is about making calculation \emph{tractable}, so that ``local''
operations in one domain correspond to ``local'' operations in the
other. This makes it possible to structure calculations in terms of
the structure of the elements over which the calculations are being
performed, which has far-ranging consequences on the complexity of
calculation in the representative domain. Of note, we have been
careful to restrict attention to the image of the encoding. We do not,
as of yet, have an effective characterization of processes that will
decide whether or not a process is in the image of the encoding. This
restriction notwithstanding, the image is the only system thus far
proposed that does enjoy all of the properties listed, including possessing (actually, being) a
language for classifying knots via logical properties.

\subsection{Knot queries}\label{sub:knot_queries} 
The interest in knot tabulation coming from the physical sciences is a
strong motivation to investigate the design of a specialized form of
spatial logic tailored to reasoning about processes in the image of
this encoding. Specifically, the current authors are in the process of
devising an executable knot query language that runs on top of a
translation of spatial logic predicates to XQuery, the XML query
language \cite{W3C-XQ-2873221} \cite{W3C-XQ-5631451}. In this
application, a member, $K$, of an isotopy class $[K]_{\sim}$ is stored
as an XML document $\meaningof{\meaningof{K}}_{XML}$. As the notation
indicates, the document may be calculated from the process expression,
$\meaningof{K}$. Pleasantly, all such documents will conform to an XML
schema \cite{Fallside:04:XSP} \cite{Mendelsohn:04:XSP}
\cite{Biron:04:XSP} that may be formally derived from the grammar for
processes described in section \ref{subsub:process_grammar}. Thus, the
mathematics extends to a formal specification of the software
implementation of a ``knot database'' and the compositional nature of
the specification makes tractable formal verification of the
correctness of the software with regards to the specification.

\subsection{Braids, tangles, virtual knots, racks and quandles}\label{sub:braids_tangles_and_virtual_knots} 

As may be seen from the encoding of the Reidemeister moves, nothing in
this approach restricts it to knots. In particular, the same
techniques may be lifted to braids and tangles. More generally,
Kauffman posits an intriguing new member to the knot family by
virtualizing the crossings in a knot diagram
\cite{kauffman-2005-VKNL}. We note that while we arrived at this
encoding before becoming aware of Kauffman's work, that line of
investigation is very much aligned to the guiding intuitions of the
encoding presented here. Namely, the crossing circuit \emph{a priori}
could be any $\pi$-calculus process that respects the interface of the
wiring circuit. We conjecture that many forms of \emph{virtualization}
may be faithfully interpreted as \emph{simulation}.

Moreover, it seems likely that the rack and quandle frameworks may be
mirrored in the process calculus, as these structures can be used to
model the \emph{action} of the Reidemeister moves on a knot. The
relation between this type of action and the dynamics of the process
calculus presented here remains to be investigated.


\subsection{Other calculi, other bisimulations and geometry as behavior}\label{sub:other_calculi_other_bisimulations_and_geometry_as_behavior} 

The astute reader may have noticed that the encoding
described here is not much more than a linear notation for a minimal
graph-based representation of a knot diagram. In this sense, there appears to be
nothing particularly remarkable about the representation. Though
expressed in a seemingly idiosyncratic way, the encoding is built from the same
information that any freely downloadable program for calculating knot
polynomials uses routinely. What is remarkable about this
representational framework is that it enjoys an \emph{independent}
interpretation as the description of the behavior of concurrently
executing processes. Moreover, the notion of the equivalence of the
behavior of two processes (in the image of the encoding) coincides
exactly with the notion of knot equivalence. It is the precise
alignment of independently discovered notions that often indicates a
phenomena worth investigating.

This line of thought seems particularly strengthened when we recall
the $\pi$-calculus is just one of many `computational calculi' ---the
$\lambda$-calculus being another paradigmatic example--- that may be
thought of as a \emph{computational dynamics+algebra} and that
virtually every such calculus is susceptible to a wide range of
bisimulation and bisimulation up-to techniques
\cite{milner92techniques}. As such, we see the invariant discussed
here as one of many potential such invariants drawn from these
relatively new algebraic structures. It is in this sense that we see
it as a new kind of invariant and is the inspiration for the other
half of the title of this paper.

Finally, if the reader will permit a brief moment of philosophical
reflection, we  conclude by observing that such a connection fits
into a wider historical context. There is a long-standing enquiry and
debate into the nature of physical space. Using the now familiar
signposts, Newton's physics ---which sees space as an absolute
framework--- and Einstein's ---which sees it as arising from and
shaping interaction--- we see this connection as fitting squarely
within the Einsteinian \emph{weltanschauung}. On the other hand,  unlike the particular mathematical framework of continuity in
which Einstein worked out his programme, behavior and its implied notions of space and time are
entirely discrete in this setting, built out of names and acts of
communication. In this light we look forward to revisiting the now
well-established connection between the various knot invariants such
as the Kauffman bracket and quantum groups. Specifically, it appears
that the kinematic picture of loop quantum gravity derived from spin
networks can be faithfully encoded in a manner analogous to one used
here to encode knots, but the process structure offers an account of
dynamics somewhat different from spin foams \cite{baez-2000-543}.



\section{Appendix: From DT-codes to processes}

What follows is an algorithm for calculating the $\omega$ indexing
function. Note that the function \emph{over} can be implemented easily using the function $\delta$, or the function $sgn$, described in section \ref{ssub:dowker_thistlethwaite_codes}. Exercise for the reader: upgrade the algorithm so that wires
are not duplicated during the process.  \newpage

\begin{verbatim}
(* ------------------------------------------------------------------ *)
(*                                                                    *)
(* Given types Knot and Wire, the omega function is typed as follows  *)
(* omega : int -> (int -> int) -> (int -> int) -> Knot -> (Wire list) *)
(* With dt a table representing the Dowker-Thistlethwaite mapping     *)
(* from odds to evens, and dti the inverse. The function keeps an     *)
(* accumulator, acc, in which to collect the wires it calculates.     *)
(* The function assumes helper functions C and x0,x1,y0,y1. C takes   *)
(* and instance of type Knot and the odd index of the DT code and     *)
(* returns the crossing. The other helpers are accessors of the ports *)
(* of the crossing. The algorithm visits every crossing and so        *)
(* generates many wires more than once, which is why the accumulator  *)
(* is updated with union rather than cons on the recursive calls. As  *)
(* an exercise, modify the algorithm to avoid wire duplication.       *)
(*                                                                    *)
(* ------------------------------------------------------------------ *)

let omega i dt dti knot acc = 
  if (i <= (numCrossings knot)) 
     then 
       let ic = (2*i - 1) in 
         (omega 
            (i+1) dt dti knot 
            (union acc 
               [ W(x1(C(knot,ic)), 
                 (if (over dt ic-1)
                     then y0(C(knot,ic-1))
                     else y1(C(knot,ic-1)))); 
                 W(y0(C(knot,ic)), 
                 (if (over dt ic+1)
                     then x1(C(knot,ic+1))
                     else x0(C(knot,ic+1)))); 
                 W(x0(C(knot,ic)), 
                 (if (over dt (dti ((dt ic)-1)))
                     then y0(C(knot,(dti ((dt ic)-1))))
                     else y1(C(knot,(dti ((dt ic)-1)))))); 
                 W(y1(C(knot,ic)), 
                 (if (over dt (dti ((dt ic)+1)))
                     then x1(C(knot,(dti ((dt ic)+1))))
                     else x0(C(knot,(dti ((dt ic)+1)))))) ])) 
     else acc
\end{verbatim}



\paragraph{Acknowledgments.}
The authors wish to thank Matthias Radestock for suggestions for
coding wires, and early advocation of the wire-saturated form the
final encoding took and his probing questions regarding proof
strategies in earlier drafts of this paper. One of the co-author
wishes to acknowledge his longstanding debt to Samson Abramksy for
making so accessible his foundational insights into the Curry-Howard
isomorphism. Additionally, the authors would like to thank the
reviewers of ICALP 2006 for their helpful comments on an earlier write
up of these results.


\newpage

\bibliographystyle{plain}   
\bibliography{jktir.bib}

 \end{document}